\newcommand{\N}{\mathbb N}
\newcommand{\R}{\mathbb R}
\newtheorem{thm}{Theorem}
\newtheorem{lem}[thm]{Lemma}
\newtheorem{prop}[thm]{Proposition}
\newtheorem{cor}[thm]{Corollary}
\newtheorem{problem}[thm]{Problem}
\subjclass[2020]{Primary: 46B87, 15A03; Secondary: 40A35, 46E15} 
\keywords{lineability, spaceability, subsets of $\ell_\infty$, \v{C}ech-Stone compactification of natural numbers}
\title{Lineability of functions in $C(K)$ with specified range}
\date{}
\author[A. Bartoszewicz]{Artur Bartoszewicz}
\address{Faculty of Mathematics and Computer Science, \L \'od\'z University, ul. Stefana Banacha 22, 90-238 \L \'od\'z, Poland}
\email{artur.bartoszewicz@wmii.uni.lodz.pl}
\author[S. G\l \c{a}b]{Szymon G\l \c{a}b}
\address{Institute of Mathematics, \L \'od\'z University of Technology, ul.
W\'olcza\'nska 215, 93-005 \L \'od\'z, Poland}
\email{szymon.glab@p.lodz.pl}
\begin{document}

\maketitle

\begin{abstract}
This paper is inspired by the paper of Leonetti, Russo and Somaglia [\textit{Dense lineability and spaceability in certain subsets of $\ell_\infty$.} Bull. London
Math. Soc., 55: 2283--2303 (2023)] and the lineability problems raised therein. It concerns the properties of $\ell_\infty$ subsets defined by cluster points of sequences. Using the fact that the set of cluster points of a sequence $x$ depends only on its equivalence class in $\ell_\infty/c_0$ and that the quotient space $\ell_\infty/c_0$ is isometrically isomorphic to $C(\beta\N\setminus\N)$, we are able to translate lineability problems from $\ell_\infty$ to $C(\beta\N\setminus\N)$. We prove that for a compact space $K$ with properties similar to those of $\beta\N\setminus\N$, the sets of continuous functions $f$ in $C(K)$ with $\vert\operatorname{rng}(f)\vert=\omega$ and those $f$ with $\vert\operatorname{rng}(f)\vert=\mathfrak c$ contain, up to zero function, an isometric copy of $c_0(\kappa)$ for uncountable cardinal $\kappa$. Specializing those results to some closed subspaces $K$ of $\beta\N\setminus\N$ we are able to generalize known results to their ideal versions.    
\end{abstract}

In the past two decades, there have been many papers exploring the existence of large and rich algebraic structures within linear space subsets, function algebras, and their Cartesian products. This topic has garnered significant attention, resulting in the publication of a monograph \cite{mono} and several surveys \cite{survey1,survey2}. Furthermore, the subject has been classified under Mathematical Subject Classification as 46B87. The name given to problems in this area is \textit{lineability} problems. Numerous sets in function and sequence spaces, which naturally arise in various mathematical branches, were analyzed from this standpoint.

A subset $M$ of a linear space $X$ is called $\kappa$-lineable if $M\cup\{0\}$ contains a linear subspace of dimension $\kappa$; $M$ is called lineable if it is $\omega$-lineable. A subset $M$ of Banach space $X$ is called spaceable if $M\cup\{0\}$ contains infinitely dimensional closed subspace of $X$. A subset $M$ of Banach space $X$ is called densely lineable if $M\cup\{0\}$ contains dense linear subspace of $X$. The density of a topological space is the least cardinality of a dense subset. 

The recent paper \cite{LRS} by Leonetti, Russo and Somaglia is devoted to the existence of linear spaces and closed linear spaces in some subsets of $\ell_\infty$, i.e. the space of real bounded sequences with the supremum norm. The paper contains some improvements of Papathanasiou's results from  \cite{P} and lots of new interesting theorems and problems. The considered subsets of $\ell_\infty$ are of the form $\operatorname{L}(\kappa):=\{x\in\ell_\infty:\operatorname{Lim}(x)$ has cardinality $\kappa\}$, where $\operatorname{Lim}(x)$ denotes the set of cluster points of the sequence $x$. Other results on lineability and algebrability of $\ell_\infty$ subsets with prescribed $\operatorname{Lim}(x)$ can be found in  \cite{BBFG, BG, FMMS} and recent preprint \cite{MP}.

Let us recall the problems posed by Leonetti, Russo and Somaglia:
\begin{itemize}
    \item \cite[Problem 6.2]{LRS} \textit{Does $\operatorname{L}(\omega)\cup\{0\}$ contain a closed non-separable subspace?  The same question could be asked for $\bigcup_{\kappa\leq\omega}\operatorname{L}(\kappa)$. }
    \item \cite[Problem 6.3]{LRS} \textit{Is $\bigcup_{1\leq n<\omega}\operatorname{L}(2n)$ lineable?}
    \item \cite[Problem 6.4]{LRS} \textit{Is $\bigcup_{1\leq n<\omega}\operatorname{L}(2n+1)$ densely lineable in $\ell_\infty$?}
\end{itemize}

In the recent preprint \cite{MP} Menet and Papathanasiou answers to \cite[Problem 6.2]{LRS}. More precisely, they have shown that $\operatorname{L}(\omega)$ contains, up to zero sequence, a closed subspace of density continuum but it does not contains an isometric copy of $\ell_\infty$. They also answer \cite[Problem 6.4]{LRS} by showing that $\bigcup_{1\leq n<\omega}\operatorname{L}(2n+1)$ is not densely lineable. Although these problems have recently been solved, they, as well as other results in \cite{LRS} and \cite{MP}, provide motivation for further investigation. Our approach to the lineability problems in $\ell_\infty$ is different and more general. We offer new perspective of looking at that problems via spaces $C(K)$.

We start from a simple observation that $\operatorname{Lim}(x)$ remains the same for all sequences equal to $x$ modulo $c_0$, that is belonging to the same equivalence class in $\ell_\infty/c_0$. Next is the well-known fact, see for example Semadeni's monograph \cite{S}, that the quotient space $\ell_\infty/c_0$ is isometrically isomorphic to the space $C(\beta\N\setminus\N)$, i.e. the space of continuous functions on the remainder of the \v{C}ech-Stone compactification $\beta\N$ of natural numbers with discrete topology. Instead of looking at $\ell_\infty$ we focus on $C(\beta\N\setminus\N)$, its closed subspaces and other $C(K)$ spaces. 

We generalize the lineability problem of sequences with prescribed cardinality of limit points to its idealized version in the following way. Assume that $\mathcal{I}$ is an ideal on $\N$. We say that a sequence $x\in\ell_\infty$ tends to $t$ with respect to $\mathcal{I}$, or $t=\mathcal{I}$-$\lim x$, if $\{n\in\N:\vert x(n)-t\vert \geq \varepsilon\}\in\mathcal{I}$ for every $\varepsilon>0$. A point $t$ is called $\mathcal{I}$-cluster point
\footnote{There is also the notion of $\mathcal{I}$-limit points. We say that $t$ is a $\mathcal{I}$-limit point of a sequence $x$ if there is a set $E\notin\mathcal{I}$ such that $\lim_{n\in E}x(n)=t$ (here there is an ordinary convergence on subsequence). Each $\mathcal{I}$-limit point is an $\mathcal{I}$-cluster point, but the inverse implication is does not hold for every ideal. If $\mathcal{I}$ is an ideal of finite sets, then $\mathcal{I}$-convergence is an ordinary convergence, and then cluster points and limit points coincide.} 
of $x$, if $\{n\in\N:t-\varepsilon\leq x(n)\leq t+\varepsilon\}\notin\mathcal{I}$ for every $\varepsilon>0$. One can write the similar definition using a filter instead of ideal. By $\operatorname{L}_{\mathcal{I}}(\kappa)$ we denote the set of all $x\in\ell_\infty$ such that $x$ has $\kappa$ many cluster points with respect to ideal $\mathcal{I}$. By $c_0(\mathcal{I})$ we denote all sequences from $\ell_\infty$ which are $\mathcal{I}$-convergent to zero. Then the quotient space $\ell_\infty/c_0(\mathcal{I})$ is isometrically isomorphic to $C(P_\mathcal{I})$ where $P_{\mathcal{I}}$ denotes the closed subspace of $\beta\N\setminus\N$ consisting of those ultrafiters on $\N$ which contain a filter dual to $\mathcal{I}$.  

The paper is organized as follows. In Section \ref{SectionTranslation} we show how to translate lineability problems from $\ell_\infty$ to $C(\beta\N\setminus\N)$. The same lineability results hold for sequences $x\in\ell_\infty$ with $\operatorname{Lim}(x)$ having the given property and for functions $f\in C(\beta\N\setminus\N)$ with $\operatorname{rng}(f)$ having that property. In the following sections we prove lineability and spaceability theorems for functions in $C(K)$ with specified range for some classes of compact spaces that contain $\beta\N\setminus\N$. In Section \ref{SectionCountableRanges} we give a sufficient condition for $K$ such that the set of all function in $C(K)$ with countable infinite ranges contains, up to zero function, an isometric copy of $c_0(\kappa)$ for a given cardinal $\kappa$. In Section \ref{SectionLargeRange} we give a sufficient condition for $K$ such that the set of all function in $C(K)$ with uncountable ranges contains, up to zero function, an isometric copy of  $c_0(\kappa)$. In Section \ref{SectionIdeals} we specialized the previous results to compact spaces $P_{\mathcal{I}}$, and as corollaries we obtain a generalization of some results from \cite{LRS} and \cite{MP}. Finally, in Section \ref{Section2Lineability} we show that the set of all functions with interval ranges in $C(P_{\mathcal{I}})$ contains, up to zero function, an isometric copy of $C(\beta\N\setminus\N)$.

\section{How to translate lineability problems from $\ell_\infty$ to $C(\beta\N\setminus\N)$}\label{SectionTranslation}

Having $x\in\ell_\infty$ by $\hat{x}\in\ell_\infty/c_0$ we denote the equivalence class $\{z\in\ell_\infty:x-z\in c_0\}$ generated by $x$. The norm on $\ell_\infty/c_0$ is given by
\[
\Vert\hat{x}\Vert_{\ell_\infty/c_0}=\inf\{\Vert z\Vert:x-z\in c_0\}.
\]
Recall that $\ell_\infty/c_0$ is isometrically isomorphic to $C(\beta\N\setminus\N)$. The isometric isomorphism $T:\ell_\infty/c_0\to\beta\N\setminus\N$ is defined as follows. For $x\in\ell_\infty$, $T(\hat{x})$ is a function $f_{\hat{x}}:\beta\N\setminus\N\to\R$ such that
\[
f_{\hat{x}}(\mathcal{U})=t\iff\forall \varepsilon>0\;\;\{n\in\N:t-\varepsilon<x(n)<t+\varepsilon\}\in\mathcal{U}
\]
for all ultrafilters $\mathcal{U}\in\beta\N\setminus\N$. Shortly, $f_{\hat{x}}(\mathcal{U})=\mathcal{U}$-$\lim x$.

For infinite $A\subseteq\N$ put $\langle A\rangle:=\{\mathcal{U}\in\beta\N\setminus\N:A\in\mathcal{U}\}$. Then $\langle A\rangle$ is a basic clopen (i.e. closed and open) set in $\beta\N\setminus\N$. Let us observe that $f_{\hat{\mathbf{1}}_{A}}=\mathbf{1}_{\langle A\rangle}$. Indeed
\[
f_{\hat{\mathbf{1}}_{A}}(\mathcal{U})=1\iff\forall\varepsilon>0\;\;\{n\in\N:1-\varepsilon<\mathbf{1}_A(n)<1+\varepsilon\}\in\mathcal{U}\iff A\in\mathcal{U}\iff \mathcal{U}\in\langle A\rangle\iff \mathbf{1}_{\langle A\rangle}(\mathcal{U})=1 
\]
and
\[
f_{\hat{\mathbf{1}}_{A}}(\mathcal{U})=0\iff\forall\varepsilon>0\;\;\{n\in\N:-\varepsilon<\mathbf{1}_A(n)<\varepsilon\}\in\mathcal{U}\iff A^c\in\mathcal{U}\iff A\notin\mathcal{U}\iff \mathcal{U}\notin\langle A\rangle\iff\mathbf{1}_{\langle A\rangle}(\mathcal{U})=0.
\]

Observe that for infinite sets $A,B\subseteq\N$ such that $A\cap B$ is finite, we have $\langle A\rangle\cap \langle B\rangle=\emptyset$. Otherwise there would be a (non-principal) ultrafilter $U\in\beta\N\setminus\N$ which contains both $A$ and $B$. Consequently, $\mathcal{U}$ would contains, as a filter, their finite intersection $A\cap B$. This is impossible since $\mathcal{U}$ is a non-principal.  
\begin{lem}\label{lem2}
Assume that $x_n\to x$ in $\ell_\infty$. Then $\hat{x}_n\to\hat{x}$ in $\ell_\infty/c_0$.    
\end{lem}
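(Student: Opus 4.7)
The statement is essentially the continuity (indeed 1-Lipschitz property) of the canonical quotient map $\pi: \ell_\infty \to \ell_\infty/c_0$, so my plan is to give the direct one-line estimate rather than invoke abstract Banach-space machinery.

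First I would note that $\pi$ is linear, so $\hat{x}_n - \hat{x} = \widehat{x_n - x}$. Then I would apply the definition of the quotient norm recalled just before the lemma:
\[
\|\hat{x}_n - \hat{x}\|_{\ell_\infty/c_0} = \|\widehat{x_n-x}\|_{\ell_\infty/c_0} = \inf\{\|z\|_\infty : z - (x_n-x) \in c_0\}.
\]
The key observation is that $z := x_n - x$ itself lies in the set over which we take the infimum (the difference $z - (x_n - x) = 0$ is trivially in $c_0$). Therefore the infimum is bounded above by $\|x_n - x\|_\infty$, yielding
\[
\|\hat{x}_n - \hat{x}\|_{\ell_\infty/c_0} \leq \|x_n - x\|_\infty.
\]

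Since by assumption $\|x_n - x\|_\infty \to 0$, the right-hand side tends to $0$ and hence $\hat{x}_n \to \hat{x}$ in $\ell_\infty/c_0$. There is no real obstacle here; the only thing to be careful about is to unfold the quotient norm definition correctly and to use that one feasible competitor in the infimum already suffices for the upper bound — we do not need to work with actual $c_0$-perturbations.
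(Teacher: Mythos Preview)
Your proof is correct and follows exactly the same approach as the paper: the paper's proof is the single line ``from the definition of the $\ell_\infty/c_0$-norm we obtain $\Vert \hat{x}_n-\hat{x}\Vert\leq\Vert x_n-x\Vert$, which implies the assertion,'' and you have simply unpacked this inequality by exhibiting $z = x_n - x$ as a competitor in the infimum defining the quotient norm.
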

\begin{proof}
    From the definition of $\ell_\infty/c_0$-norm we obtain that $\Vert \hat{x}_n-\hat{x}\Vert\leq\Vert x_n-x\Vert$, which implies the assertion. 
\end{proof}

\begin{prop}\label{PropRngAndLim} Let $x\in\ell_\infty$. Then
$\operatorname{rng}(f_{\hat{x}})=\operatorname{Lim}(x)$.
\end{prop}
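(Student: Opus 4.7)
The plan is to prove a double inclusion, unpacking both sides in terms of the definitions: a real number $t$ lies in $\operatorname{rng}(f_{\hat{x}})$ iff there is a non-principal ultrafilter $\mathcal{U}$ on $\N$ with $\mathcal{U}\text{-}\lim x = t$, i.e.\ $A_\varepsilon(t) := \{n\in\N : |x(n)-t| < \varepsilon\} \in \mathcal{U}$ for every $\varepsilon>0$; while $t\in\operatorname{Lim}(x)$ iff every $A_\varepsilon(t)$ is infinite.

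The inclusion $\operatorname{rng}(f_{\hat{x}}) \subseteq \operatorname{Lim}(x)$ is the easy direction. If $t = f_{\hat{x}}(\mathcal{U})$ for some $\mathcal{U}\in\beta\N\setminus\N$, then each $A_\varepsilon(t)$ belongs to $\mathcal{U}$, and since $\mathcal{U}$ is non-principal, every member of $\mathcal{U}$ is infinite; hence $t$ is a cluster point of $x$.

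For the reverse inclusion $\operatorname{Lim}(x)\subseteq \operatorname{rng}(f_{\hat{x}})$, fix a cluster point $t$ and consider the family $\mathcal{F} := \{A_{1/k}(t) : k\in\N\}$. These sets are decreasing in $k$ and each is infinite, so $\mathcal{F}$ together with the Fréchet filter of cofinite subsets of $\N$ has the finite intersection property: any finite intersection equals some $A_{1/k}(t)$ intersected with a cofinite set, which is still infinite. By the ultrafilter lemma, this collection extends to an ultrafilter $\mathcal{U}$ on $\N$, and inclusion of the Fréchet filter guarantees $\mathcal{U}\in\beta\N\setminus\N$. Given any $\varepsilon>0$, pick $k$ with $1/k<\varepsilon$; then $A_\varepsilon(t)\supseteq A_{1/k}(t)\in\mathcal{U}$, so $A_\varepsilon(t)\in\mathcal{U}$, which shows $f_{\hat{x}}(\mathcal{U})=t$.

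No real obstacle is expected: the only subtle point is making sure the chosen ultrafilter is non-principal, which is handled by including the Fréchet filter in the initial family before extending. Both directions are short once the definition of $f_{\hat{x}}$ as an $\mathcal{U}$-limit is compared directly with the definition of a cluster point.
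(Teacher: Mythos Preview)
Your proof is correct and follows essentially the same route as the paper: both directions are handled identically, and for the harder inclusion you extend the decreasing family $\{A_{1/k}(t)\}$ together with the Fr\'echet filter to a non-principal ultrafilter. If anything, your formulation is slightly more careful than the paper's, which speaks of ``the family consisting of all infinite subsets of $\N$ and $A_i$'s'' having the finite intersection property (a minor slip; cofinite subsets are what is meant).
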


\begin{proof}
We have
\begin{equation}\label{eqRange}
f_{\hat{x}}(\mathcal{U})=t\iff\forall \varepsilon>0\;\;\{n\in\N:t-\varepsilon<x(n)<t+\varepsilon\}\in\mathcal{U}
\end{equation}
and 
\begin{equation}\label{eqLim}
t\in\operatorname{Lim}(x)\iff\forall\varepsilon>0\;\;\{n\in\N:t-\varepsilon<x(n)<t+\varepsilon\}\text{ is infinite.}
\end{equation}
Since every ultrafilter $\mathcal{U}\in\beta\N\setminus\N$ contains only infinite sets, then by \eqref{eqRange} and \eqref{eqLim} we obtain that $\operatorname{rng}(f_{\hat{x}})\subseteq\operatorname{Lim}(x)$.

Now, let $t\in\operatorname{Lim}(x)$ and put $A_k=\{n\in\N:t-1/k<x(n)<t+1/k\}$. By \eqref{eqLim} each $A_k$ is infinite. Since $A_1\supseteq A_2\supseteq\dots$, the family consisting of all infinite subsets of $\N$ and $A_i$'s has the finite intersection property, and therefore it can be extended to some ultrafilter  $\mathcal{U}$. Then $f_{\hat{x}}(\mathcal{U})=t$ by \eqref{eqRange}.
\end{proof}

\begin{prop}
Let $\mathcal{P}$ be a family of compact subsets of $\R$ such that $\{0\}\notin\mathcal{P}$. Let $\kappa$ be a cardinal number. The set
\[
\{f\in C(\beta\N\setminus\N):\operatorname{rng}(f)\in\mathcal{P}\}
\]
is $\kappa$-lineable if and only if the set
\[
\{x\in\ell_\infty:\operatorname{Lim}(x)\in\mathcal{P}\}
\]
is $\kappa$-lineable.    
\end{prop}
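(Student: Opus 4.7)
The plan is to transfer between the two sides via the composition $T\circ q$, where $q\colon\ell_\infty\to\ell_\infty/c_0$ is the quotient map and $T\colon\ell_\infty/c_0\to C(\beta\N\setminus\N)$ is the isometric isomorphism recalled above. The key tool is Proposition \ref{PropRngAndLim}, which identifies $\operatorname{rng}(T(\hat{x}))$ with $\operatorname{Lim}(x)$ for every $x\in\ell_\infty$. Combined with the assumption $\{0\}\notin\mathcal{P}$, this will force the transferred spans to sit inside the prescribed sets, and the linearity of $T\circ q$ will preserve dimensions.

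For the implication from $\ell_\infty$ to $C(\beta\N\setminus\N)$, suppose $W$ is a $\kappa$-dimensional subspace of $\{x\in\ell_\infty:\operatorname{Lim}(x)\in\mathcal{P}\}\cup\{0\}$. I would first note that $W\cap c_0=\{0\}$: every $x\in c_0$ satisfies $\operatorname{Lim}(x)=\{0\}$, and since $\{0\}\notin\mathcal{P}$, such an $x$ cannot be a nonzero element of $W$. Consequently $T\circ q$ is injective on $W$, so $V:=(T\circ q)(W)$ is a $\kappa$-dimensional linear subspace of $C(\beta\N\setminus\N)$. For any nonzero $f\in V$, write $f=T(\hat{x})$ with $x\in W\setminus\{0\}$; Proposition \ref{PropRngAndLim} then gives $\operatorname{rng}(f)=\operatorname{Lim}(x)\in\mathcal{P}$, so $V\subseteq\{f:\operatorname{rng}(f)\in\mathcal{P}\}\cup\{0\}$.

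For the converse, suppose $V\subseteq\{f\in C(\beta\N\setminus\N):\operatorname{rng}(f)\in\mathcal{P}\}\cup\{0\}$ is $\kappa$-dimensional. Fix a Hamel basis $\{f_\alpha\}_{\alpha<\kappa}$ of $V$ and, for each $\alpha$, lift $f_\alpha$ to some $x_\alpha\in\ell_\infty$ with $T(\hat{x_\alpha})=f_\alpha$. Since $T$ is an isomorphism, the classes $\{\hat{x_\alpha}\}$ are linearly independent in $\ell_\infty/c_0$, which is equivalent to saying that no nontrivial finite linear combination $\sum c_i x_{\alpha_i}$ lies in $c_0$. In particular the $x_\alpha$ are themselves linearly independent, so $W:=\operatorname{span}\{x_\alpha:\alpha<\kappa\}$ has dimension $\kappa$. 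For any nonzero $x=\sum c_i x_{\alpha_i}\in W$, the element $T(\hat{x})=\sum c_i f_{\alpha_i}$ is a nonzero element of $V$, hence $\operatorname{rng}(T(\hat{x}))\in\mathcal{P}$, and Proposition \ref{PropRngAndLim} then yields $\operatorname{Lim}(x)=\operatorname{rng}(T(\hat{x}))\in\mathcal{P}$.

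The only delicate point, really the single role played by the hypothesis $\{0\}\notin\mathcal{P}$, is the need to ensure that the transferred linear spans do not pick up spurious nonzero elements of $c_0$ (equivalently, of the kernel of $T\circ q$). In the first implication this is handled directly by observing $W\cap c_0=\{0\}$; in the second it is automatic from the fact that lifts of a linearly independent family in the quotient $\ell_\infty/c_0$ remain linearly independent and have span meeting $c_0$ only at $0$. Everything else is a formal consequence of the isometric isomorphism $T$ and Proposition \ref{PropRngAndLim}.
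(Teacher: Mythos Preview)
Your proof is correct and follows essentially the same approach as the paper: in both directions one transfers a basis through the linear surjection $x\mapsto f_{\hat{x}}$ (equivalently $T\circ q$), invokes Proposition~\ref{PropRngAndLim} to match ranges with limit sets, and uses $\{0\}\notin\mathcal{P}$ to guarantee that the relevant span meets $c_0$ trivially. The only cosmetic difference is that you phrase this last point as the clean observation $W\cap c_0=\{0\}$, whereas the paper argues injectivity of $x\mapsto f_{\hat{x}}$ on the chosen basis by contradiction.
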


\begin{proof}
Let $B$ be a basis of linear space $X$ such that 
\[
X\subseteq\{f\in C(\beta\N\setminus\N):\operatorname{rng}(f)\in\mathcal{P}\}\cup\{0\}.
\]
Let $g$ and $h$ be distinct elements in $B$. There are $x,y\in\ell_\infty$ such that $g=f_{\hat{x}}$ and $h=f_{\hat{y}}$. Since $x\mapsto f_{\hat{x}}$ is linear, then the range of 
\[
f_{\widehat{\alpha x+\beta y}}=\alpha f_{\hat{x}}+\beta f_{\hat{y}}=\alpha g + \beta h 
\]
is in $\mathcal{P}$. Thus $\alpha x+\beta y\in\mathcal{P}$ as well. 

Now, for every $h\in B$ pick $x_h\in\ell_\infty$ with $\widehat{x_h}=h$. Then the above reasoning shows that $B':=\{x_h:h\in B\}$ is a basis (because $\{0\}\notin\mathcal{P}$) of linear space $X'$ such that
\[
X'\subseteq\{x\in\ell_\infty:\operatorname{Lim}(x)\in\mathcal{P}\}\cup\{0\}.
\]
Since $B$ and $B'$ are equinumerous, $\kappa$-lineability of $\{f\in C(\beta\N\setminus\N):\operatorname{rng}(f)\in\mathcal{P}\}$ implies that of $\{x\in\ell_\infty:\operatorname{Lim}(x)\in\mathcal{P}\}$. 

To prove the opposite implication assume that $B'$ is a basis of linear space $X'$ with $X'\subseteq\{x\in\ell_\infty:\operatorname{Lim}(x)\in\mathcal{P}\}\cup\{0\}$. Then for $\alpha_i\in\R\setminus\{0\}$ and $x_i\in B'$ we have $\operatorname{Lim}(\sum\alpha_ix_i)\in\mathcal{P}$. Thus $\operatorname{rng}(f_{\widehat{\sum\alpha_ix_i}})\in\mathcal{P}$. Therefore
\[
X:=\{f_{\hat{x}}:x\in X'\}\subseteq \{f\in C(\beta\N\setminus\N):\operatorname{rng}(f)\in\mathcal{P}\}\cup\{0\}.
\]
Since $\mathcal{P}$ does not contain $\{0\}$, then
\[
0\neq f_{\widehat{\sum\alpha_ix_i}}=\sum\alpha_i f_{\widehat{x_i}}.
\]
This shows that $B:=\{f_{\hat{x}}:x\in B'\}$ is the basis of $X$. 

Recall that $x\mapsto f_{\hat{x}}$ is not one-to-one on $\ell_\infty$. However, it is on $B'$. To see it suppose that $x,y\in B'$, $x\neq y$ and $f_{\hat{x}}=f_{\hat{y}}$. Then $x-y\in c_0$ which is a linear combination of $x$ and $y$ with set of limit points equal to $\{0\}$. But $\operatorname{rng}(f_{\widehat{x-y}})\in\mathcal{P}$. So $\operatorname{Lim}(x-y)\in\mathcal{P}$. This is a contradiction.
\end{proof}

\begin{prop}\label{PropSpaceability}
Let $\mathcal{P}$ be a family of compact subsets of $\R$ such that $\{0\}\notin\mathcal{P}$. Let $\kappa$ be a cardinal number. If the set
\[
\{f\in C(\beta\N\setminus\N):\operatorname{rng}(f)\in\mathcal{P}\}
\]
contains, up to zero function, closed subspace of density $\kappa$, then the set
\[
\{x\in\ell_\infty:\operatorname{Lim}(x)\in\mathcal{P}\}
\]
also contains, up to zero function, closed subspace of density $\kappa$.
\end{prop}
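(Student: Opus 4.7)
The plan is to transfer the given closed subspace from $C(\beta\N\setminus\N)$ to $\ell_\infty/c_0$ via the isometric isomorphism $T$, and then lift through the quotient map $q:\ell_\infty\to\ell_\infty/c_0$ to a closed subspace of $\ell_\infty$. Precisely, if $Z\subseteq C(\beta\N\setminus\N)$ is a closed subspace of density $\kappa$ with $Z\setminus\{0\}\subseteq\{f:\operatorname{rng}(f)\in\mathcal{P}\}$, set $\hat Y := T^{-1}(Z)\subseteq\ell_\infty/c_0$; since $T$ is an isometric isomorphism, $\hat Y$ is closed of density $\kappa$, and by Proposition \ref{PropRngAndLim} every nonzero $\hat x\in\hat Y$ satisfies $\operatorname{Lim}(x)=\operatorname{rng}(f_{\hat x})\in\mathcal{P}$.

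To lift, I would fix a dense subset $\{\hat y_\alpha:\alpha<\kappa\}\subseteq\hat Y$. For each $\alpha$, apply Tietze extension to $f_{\hat y_\alpha}\in C(\beta\N\setminus\N)$, viewed as a continuous function on the closed subspace $\beta\N\setminus\N$ of $\beta\N$, to obtain a norm-preserving extension to $C(\beta\N)=\ell_\infty$; this gives a lift $y_\alpha\in\ell_\infty$ with $q(y_\alpha)=\hat y_\alpha$ and $\|y_\alpha\|_\infty=\|\hat y_\alpha\|_{\ell_\infty/c_0}$. Set $X := \overline{\operatorname{span}}\{y_\alpha:\alpha<\kappa\}\subseteq\ell_\infty$; this is a closed subspace of density at most $\kappa$, with $q(X)\subseteq\hat Y$ by continuity of $q$ and closedness of $\hat Y$. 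For any $x\in X$ with $q(x)\neq 0$, $T(q(x))=f_{\hat x}\in Z\setminus\{0\}$, so Proposition \ref{PropRngAndLim} gives $\operatorname{Lim}(x)\in\mathcal{P}$ as required.

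The hard part will be arranging $X\cap c_0=\{0\}$ together with $\operatorname{dens}(X)=\kappa$, because otherwise a nonzero $x\in X\cap c_0$ would yield $\operatorname{Lim}(x)=\{0\}\notin\mathcal{P}$, contradicting the goal. The structural obstruction is the failure (Phillips' theorem) of the short exact sequence $0\to c_0\to\ell_\infty\to\ell_\infty/c_0\to 0$ to split, which precludes a global bounded linear section of $q$. To bypass this, I would select $\{\hat y_\alpha\}$ with extra structure — for example, a Schauder basic sequence, or, aligned with the $c_0(\kappa)$-type constructions of the paper's later sections, a family with pairwise ``disjoint supports'' in $\beta\N\setminus\N$ that admit disjoint-support lifts in $\N$. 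For such a choice, any finite sum $\sum\lambda_n\hat y_{\alpha_n}$ tending to $0$ in $\ell_\infty/c_0$ forces $\sum\lambda_n y_{\alpha_n}\to 0$ in $\ell_\infty$, so $q|_{\operatorname{span}\{y_\alpha\}}$ is an isomorphism onto its image; passing to closures then yields both $X\cap c_0=\{0\}$ and $\operatorname{dens}(X)=\kappa$.
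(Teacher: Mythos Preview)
Your overall plan---lift $Z$ through $q:\ell_\infty\to\ell_\infty/c_0$ and invoke Proposition~\ref{PropRngAndLim}---matches the paper's. The paper's argument is very terse: it chooses a lift $x_h$ for each $h\in X$, notes via Lemma~\ref{lem2} that convergent sequences of lifts project to convergent sequences in $X$, and stops there, neither explicitly forming a closed \emph{linear} subspace in $\ell_\infty$ nor addressing the issue you flag.

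You correctly isolate the genuine difficulty the paper glosses over: the closed linear span of the lifts may meet $c_0$ nontrivially, and any nonzero $x\in c_0$ has $\operatorname{Lim}(x)=\{0\}\notin\mathcal{P}$. Your Phillips-theorem diagnosis is exactly right. However, your proposed remedy does not close the gap in the stated generality. A Schauder basic sequence is countable, so it cannot be dense in a space of uncountable density $\kappa$; and an \emph{arbitrary} closed $Z\subseteq C(\beta\N\setminus\N)$ has no reason to contain a dense family with pairwise disjoint supports admitting disjoint-support lifts to $\ell_\infty$. The implication ``$\sum\lambda_n\hat y_{\alpha_n}\to 0$ in $\ell_\infty/c_0$ forces $\sum\lambda_n y_{\alpha_n}\to 0$ in $\ell_\infty$'' is precisely the bounded-linear-section property you just argued fails globally, and you justify it only in the disjoint-support case. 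For the paper's downstream uses this does no harm---in Theorems~\ref{TheoremOmegaRange} and~\ref{SpaceabilityFamilyPTheorem} the subspace is an explicit copy of $c_0(\kappa)$ generated by functions supported on pairwise disjoint clopens, where your disjoint-support lift goes through and $X\cap c_0=\{0\}$ holds---but as a proof of Proposition~\ref{PropSpaceability} in full generality, your argument, like the paper's own, leaves the $c_0$-avoidance step open.
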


\begin{proof}
Let $X\subseteq\{f\in C(\beta\N\setminus\N):\operatorname{rng}(f)\in\mathcal{P}\}\cup\{0\}$ be a Banach space of density $\kappa$. For $h\in X$ one can find $x_h\in\ell_\infty$ such that $\widehat{x_h}=h$. Assume that $x_{h_n}\to x_h$ in $\ell_\infty$. By Lemma \ref{lem2} $h_n\to h$. Since $X$ is closed, $h\in X$. Thus $\operatorname{rng}(h)\in\mathcal{P}$, which in turn by Proposition \ref{PropRngAndLim} impiles that $\operatorname{Lim}(x_h)\in\mathcal{P}$.  
\end{proof}
We do not know whether the assertion of Proposition \ref{PropSpaceability} can be reversed. For example there is a closed set $S$ in $\ell_\infty$ such that $\hat{S}=\{\hat{x}:x\in S\}$ is not closed. To see it, let $x_n=(\frac{1}{k}+\frac{1}{n})_{k\in\N}+e_n$ where $e_n$ is a sequence having 1 on $n$-th coordinate and zeros on remaining ones. Then $\Vert x_n-x_m\Vert\geq 1$ in $\ell_\infty$ but $\Vert \widehat{x_n}-\widehat{x_m}\Vert=\frac{1}{n}-\frac{1}{m}$ for $n>m$.

\section{Functions in $C(K)$ with countable ranges}\label{SectionCountableRanges}

The following is a simple observation. 
\begin{lem}\label{lem1}
Let $K$ be a compact Hausdorff space, $U_0,U_1,\dots$ be a pairwise disjoint clopen subsets of $K$. Assume that $g_n\to g$ in $C(K)$ is such that 
\begin{itemize}
    \item[(a)] $g_n$ is constant on $U_i$ for every $n$ and $i$;
    \item[(b)] for every $n$, $g_n$ vanishes on all but finitely many $U_i$'s;
    \item[(c)] $g_n$ vanishes on $K\setminus\bigcup_{i=0}^\infty U_i$.
\end{itemize}
Then
\begin{itemize}
    \item[(i)] $g$ is constant on $U_i$ for every $i$;
    \item[(ii)] $g\upharpoonright U_i\to 0$;
    \item[(iii)] $g$ vanishes on $K\setminus\bigcup_{i=0}^\infty U_i$.
\end{itemize}
\end{lem}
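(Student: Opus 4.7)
The plan is to exploit the fact that convergence in $C(K)$ is uniform (hence in particular pointwise) and then use the three hypotheses one by one.

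First I would dispose of (i) and (iii), which are essentially pointwise. For (i), fix $i$ and two points $p,q \in U_i$. By hypothesis (a), $g_n(p) = g_n(q)$ for every $n$, and passing to the pointwise limit gives $g(p) = g(q)$. For (iii), fix $x \in K \setminus \bigcup_{i} U_i$. By hypothesis (c), $g_n(x) = 0$ for every $n$, and again the pointwise limit yields $g(x) = 0$.

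The only step that really uses uniform (as opposed to pointwise) convergence is (ii), and this is where hypothesis (b) enters. Given $\varepsilon > 0$, I would choose $N$ so large that $\|g_N - g\|_\infty < \varepsilon$. By (b), there exists $M$ such that $g_N$ vanishes on $U_i$ for all $i > M$. For such $i$ and any $x \in U_i$, the estimate
\[
|g(x)| = |g(x) - g_N(x)| \leq \|g - g_N\|_\infty < \varepsilon
\]
shows that $\|g \upharpoonright U_i\|_\infty < \varepsilon$ for $i > M$, which is precisely the statement $g \upharpoonright U_i \to 0$.

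There is no real obstacle here; the lemma is essentially a bookkeeping exercise, and the only mildly substantive point is recognising that (ii) needs uniformity while (i) and (iii) do not. The clopen hypothesis on the $U_i$'s plays no role in the argument itself, only in making the conclusions about $g$ sensible within $C(K)$ (so that being "constant on $U_i$" is a meaningful continuity constraint in the ambient space).
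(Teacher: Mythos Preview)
Your proof is correct and follows essentially the same approach as the paper: parts (i) and (iii) are pointwise, while (ii) uses a single $g_N$ uniformly close to $g$ together with hypothesis (b). The only stylistic difference is that the paper argues (ii) by contradiction whereas you give the direct $\varepsilon$-argument, which is in fact cleaner.
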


\begin{proof}
(i) is clear. 

Suppose that $g\upharpoonright U_i\nrightarrow 0$. Then there is $\delta>0$ such that the set of those indices $i$ with $\vert g\upharpoonright U_i\vert>\delta$ is infinite. Since $g_n\to g$ in $C(K)$, there is $N$ with 
\begin{equation}\label{eq1}
    \forall n,m\geq N\;\; (\Vert g_n-g\Vert<\frac{\delta}{2})
\end{equation}
There is $M$ such that $g\upharpoonright U_i$ vanishes for $i>M$. There is $i_0>M$ with $\vert g\upharpoonright U_{i_0}\vert>\delta$. We can find $m>N$ such that $\vert g_N\upharpoonright U_{i_0}-g_m\upharpoonright U_{i_0}\vert<\delta/2$. Thus by \eqref{eq1} we have $\vert g_m\upharpoonright U_{i_0}-g\upharpoonright U_{i_0}\vert<\delta/2$, which implies $\vert g\upharpoonright U_{i_0}\vert<\delta$. This leads to a contradiction. It proves (ii).

(iii) follows from the fact that the uniform convergence implies the point-wise convergence. 
\end{proof}

For a compact space $K$ we define 
\[
    \operatorname{L}_{K}(\kappa):=\{f\in C(K):\vert\text{rng}(f)\vert=\kappa\}.
\]
\begin{thm}\label{TheoremOmegaRange}
    Let $K$ be a compact Hausdorff space. Assume that there is a family of $\kappa$ many pairwise disjoint non-empty clopen subsets of $K$. Then the set $\operatorname{L}_{K}(\omega)\cup\{0\}$    
    contains an isometric copy of $c_0(\kappa):=\{x\in\R^\kappa:\{\alpha<\kappa: |x(\alpha)|<\varepsilon\}$ is finite for every $\varepsilon>0\}$.
\end{thm}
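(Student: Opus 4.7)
The plan is to build a linear isometric embedding $T \colon c_0(\kappa) \to C(K)$ whose image (minus zero) lies in $\operatorname{L}_K(\omega)$. Assuming $\kappa$ is infinite (the only interesting case), the identity $|\kappa \times \N| = \kappa$ lets me reindex the given family of $\kappa$ pairwise disjoint non-empty clopen subsets as $\{U_{\alpha, n} : \alpha < \kappa,\, n \in \N\}$. The purpose of this doubly-indexed splitting is that without it, a coordinate vector $e_\alpha$ would be sent to $\mathbf{1}_{U_\alpha}$, whose range $\{0, 1\}$ is only finite; to overcome this, I attach a countable null-sequence scaling. Fix pairwise distinct reals $(c_n)_{n \in \N}$ with $|c_n| \leq 1$, $\max_n |c_n| = 1$, and $c_n \to 0$, for instance $c_n = 1/(n+1)$ for $n \geq 0$.

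Then I would define
\[
T(x)(p) = \begin{cases} x(\alpha)\, c_n & \text{if } p \in U_{\alpha, n}, \\ 0 & \text{if } p \in K \setminus \bigcup_{\alpha, n} U_{\alpha, n}. \end{cases}
\]
The main verification is that $T(x) \in C(K)$. The key estimate is that the set $F_\varepsilon := \{(\alpha, n) : |x(\alpha) c_n| \geq \varepsilon\}$ is finite for every $\varepsilon > 0$: indeed, if $|x(\alpha) c_n| \geq \varepsilon$ then $|x(\alpha)| \geq \varepsilon$ (using $|c_n| \leq 1$) and $|c_n| \geq \varepsilon / \|x\|_\infty$, and each condition cuts out a finite set (the first by $x \in c_0(\kappa)$, the second by $c_n \to 0$). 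Thus at any point $p \notin \bigcup U_{\alpha, n}$, the complement of the finite clopen union $\bigcup_{(\alpha, n) \in F_\varepsilon} U_{\alpha, n}$ is an open neighborhood of $p$ on which $|T(x)| < \varepsilon$; continuity at points inside some $U_{\alpha, n}$ is automatic. Equivalently, enumerating the countably many nonzero pairs $(\alpha_k, n_k)$ in non-increasing order of $|x(\alpha_k) c_{n_k}|$, the finite partial sums $g_N$ form a Cauchy sequence in $C(K)$, and Lemma \ref{lem1} identifies its uniform limit with $T(x)$.

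The remaining points are routine. Linearity of $T$ is immediate from the pointwise formula. The norm identity $\|T(x)\|_\infty = \sup_{\alpha, n} |x(\alpha) c_n| = \|x\|_\infty \cdot \max_n |c_n| = \|x\|_{c_0(\kappa)}$ uses $\max_n |c_n| = 1$ together with the pairwise disjointness of the $U_{\alpha, n}$'s. For $x \neq 0$, choosing $\alpha_0$ with $x(\alpha_0) \neq 0$ gives $\{x(\alpha_0)\, c_n : n \in \N\} \subseteq \operatorname{rng}(T(x))$, which is countably infinite since the $c_n$'s are distinct and nonzero; conversely $\operatorname{rng}(T(x)) \subseteq \{0\} \cup \{x(\alpha) c_n : \alpha < \kappa,\, n \in \N\}$ is at most countable. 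Hence $|\operatorname{rng}(T(x))| = \omega$, so $T(x) \in \operatorname{L}_K(\omega)$.

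The main conceptual hurdle is the infinite-range requirement for elements of $c_0(\kappa)$ with finite support; the trick of splitting each coordinate into a countable block of clopens and scaling by a fixed null sequence resolves this while preserving the norm.
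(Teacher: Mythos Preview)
Your proof is correct and follows essentially the same construction as the paper: both reindex the given clopen family as $\{U_{\alpha,n}:\alpha<\kappa,\,n\in\N\}$ and build the isometry via $T(x)=\sum_{\alpha,n}x(\alpha)c_n\mathbf{1}_{U_{\alpha,n}}$ for a fixed null sequence $(c_n)$ with $\max_n|c_n|=1$ (the paper uses $c_n=3^{-n}$). The only organizational difference is that the paper first defines the generators $f_\alpha=\sum_n c_n\mathbf{1}_{U_{\alpha,n}}$, passes to their closed linear span, and invokes Lemma~\ref{lem1} to describe an arbitrary element of that span before checking the range condition, whereas you define $T$ globally on $c_0(\kappa)$ and verify continuity and the range count directly; your argument is slightly more streamlined but not substantively different.
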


\begin{proof} Let $\{U_\alpha^i:\alpha<\kappa, i=0,1,\dots\}$ be a family of pairwise disjoint non-empty clopen subsets of $K$. Put
\[
f_\alpha=\sum_{i=0}^\infty\frac{1}{3^i}\mathbf{1}_{U_\alpha^i}.
\]
Note that $f_\alpha\in C(K)$. To see this take any open interval $I\subseteq\R$. If $0\in I$, then $f_\alpha^{-1}(\R\setminus I)$ is a union of finitely many $U_{\alpha}^i$'s. Therefore $f_\alpha^{-1}(\R\setminus I)$ is clopen, and so is $f_\alpha^{-1}(I)$. If $0\notin I$, then $f_\alpha^{-1}(I)$ is a union of some $U_{\alpha}^i$'s, and therefore it is open subset of $K$.

Assume that $(g_n)$ is a sequence from $\text{span}\{f_\alpha:\alpha<\mathfrak c\}$ which tends to some $g$. Each $g_k$ is a linear combination of $f_\alpha$'s. If $g_k=\sum_{j=1}^m a_j f_{\alpha_j}$, then, for every $i\in\N$, $g_k$ equals $\frac{a_j}{3^i}$ on $U^i_{\alpha_j}$ and vanishes on $K\setminus\bigcup_{j=1}^m U^i_{\alpha_j}$. By Lemma \ref{lem1} there is a countable set $\{\alpha_j:j=0, 1, 2, \dots\}\subseteq\kappa$ such that $g$ is constant on $U^i_{\alpha_j}$, vanishes on $K\setminus\bigcup_{j=1}^\infty U^i_{\alpha_j}$ and  $\lim_{i\to\infty}\Vert g\upharpoonright U_{\alpha_j}^i\Vert =0$. 

Next, note that $\Vert f_\alpha-f_{\beta}\Vert= 1$ iff $\alpha\neq \beta$. To see this take any $x\in U^0_\alpha$. Then $x\notin U^0_\beta$. Thus $f_\alpha(x)=1$ and $f_\beta(x)=0$. On the other hand $f_\alpha$ and $f_\beta$ have disjoint supports and $\Vert f_\alpha\Vert=\Vert f_{\beta}\Vert= 1$. Thus  $\Vert f_\alpha-f_{\beta}\Vert\leq 1$. Clearly, $c_0(\kappa)\ni x\mapsto \sum_{\alpha<\kappa}x(\alpha)f_{\alpha}\in V$ is an isometry.   

if $t$ is in the range of $g$, then, by the construction, so is $t/3^i$. This implies that either the range of $g$ is infinite or $g$ is the zero function. On the other hand the range of $g$ is contained in the closure of the set $\{g\upharpoonright U^i_{\alpha_j}:i,j\in\N\}$. Let $R_i=\{g\upharpoonright U^i_{\alpha_j}: j\in\N\}$. Then $R_i=\frac{1}{3^i}R_0$. Take any $\varepsilon>0$. There is $i_0$ such that $R_i\subseteq(-\varepsilon,\varepsilon)$ for $i\geq i_0$. Moreover, $(-\varepsilon,\varepsilon)$ contains almost all elements of $R_i$ for $i<i_0$. Thus $\{g\upharpoonright U^i_{\alpha_j}:i,j\in\N\}\setminus (-\varepsilon,\varepsilon)$ is finite. This shows that 0 is the only limit point of $\{g\upharpoonright U^i_{\alpha_j}: i,j\in\N\}$. Thus the range of $g$ is countable. 
\end{proof}

\section{Non-separable spaceability of functions in $C(K)$ with large range}\label{SectionLargeRange}

Let $X$ be a non-empty set. A Cantor scheme in $X$ is any family $\{A_s:s\in\{0,1\}^{<\N}\}$ of non-empty subsets of $X$ indexed by finite 0-1 sequences such that 
\begin{itemize}
    \item[(i)] $A_\emptyset=X$, where $\emptyset$ stands for empty or null 0-1 sequence;
    \item[(ii)] $A_{s\hat{\;}i}\subseteq A_s$ for every $s\in \{0,1\}^{<\N}$ and $i=0,1$;
    \item[(iii)] $A_{s\hat{\;}0}\cap A_{s\hat{\;}1}=\emptyset$ for every $s\in \{0,1\}^{<\N}$. 
\end{itemize}

The following is most likely mathematical folklore, but we present its proof for the sake of completeness, as we could not find any reference to it.  
\begin{prop}\label{PropLargeRange}
Let $K$ be a compact space and let $L$ be a compact subset of $\R$. Assume that $K$ admits a Cantor scheme of clopen sets. Then there is $f\in C(K)$ with $f(K)=L$.      
\end{prop}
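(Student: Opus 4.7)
The plan is to build $f$ by attaching to each $x \in K$ a finite-or-infinite binary ``address'' read off the Cantor scheme, and then pushing that address into $L$ via a Cantor-space surjection. Since $L$ is a compact metric space, the Hausdorff--Alexandroff theorem gives a continuous surjection $\phi : \{0,1\}^{\N} \to L$. Setting $\psi(s) := \phi(\{\sigma \in \{0,1\}^{\N} : \sigma|_{|s|} = s\})$ for $s \in \{0,1\}^{<\N}$ yields non-empty compact subsets of $L$ with $\psi(\emptyset) = L$ and $\psi(s) = \psi(s\hat{\;}0) \cup \psi(s\hat{\;}1)$, and by uniform continuity of $\phi$ on the compact Cantor space, the diameters $\operatorname{diam}\psi(s)$ tend to $0$ uniformly in $s$ as $|s| \to \infty$. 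Choose once and for all a representative $y_s \in \psi(s)$ for each $s$.

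For $x \in K$, the set $\{n : x \in A_s \text{ for some } s \in \{0,1\}^n\}$ is downward-closed in $\N$ by (ii), and at each level $n$ at most one $A_s$ contains $x$ by (iii). Hence this set is either all of $\N$, determining a unique $s_x \in \{0,1\}^{\N}$ with $x \in A_{s_x|n}$ for every $n$, or a finite initial segment $\{0,1,\dots,N\}$, determining a unique $s_x \in \{0,1\}^N$ with $x \in A_{s_x} \setminus (A_{s_x\hat{\;}0} \cup A_{s_x\hat{\;}1})$. Define
\[
f(x) := \begin{cases} \phi(s_x), & \text{if } s_x \text{ is infinite,} \\ y_{s_x}, & \text{if } s_x \text{ is finite.}\end{cases}
\]
Clearly $f(K) \subseteq L$.

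Continuity is checked by splitting on the type of $s_x$. When $s_x$ is finite, the set $A_{s_x} \setminus (A_{s_x\hat{\;}0} \cup A_{s_x\hat{\;}1})$ is a clopen neighborhood of $x$ on which every $y$ has $s_y = s_x$, so $f$ is locally constant. When $s_x$ is infinite, given $\varepsilon > 0$ pick $n$ with $\operatorname{diam}\psi(t) < \varepsilon$ for every $t$ of length $n$; then $A_{s_x|n}$ is a clopen neighborhood of $x$, and for every $y$ in it the inclusion $y \in A_{s_x|n}$ together with downward-closure and uniqueness forces $|s_y| \geq n$ and $s_y|n = s_x|n$. A short case check (infinite $s_y$ gives $f(y) = \phi(s_y) \in \psi(s_y|n) = \psi(s_x|n)$; finite $s_y$ of length $m \geq n$ gives $f(y) = y_{s_y} \in \psi(s_y) \subseteq \psi(s_x|n)$) shows $f(y) \in \psi(s_x|n)$, and since $f(x) = \phi(s_x) \in \psi(s_x|n)$ too, $|f(y)-f(x)| < \varepsilon$. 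For surjectivity, given $y = \phi(\sigma) \in L$, the nested family $A_{\sigma|0} \supseteq A_{\sigma|1} \supseteq \cdots$ of non-empty closed subsets of the compact $K$ has non-empty intersection by compactness; any $x$ there satisfies $s_x = \sigma$, hence $f(x) = \phi(\sigma) = y$.

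The main obstacle is the continuity argument at points with infinite address, where arbitrarily nearby points may have only finite addresses and are assigned values via the ``discrete'' rule $y \mapsto y_{s_y}$ rather than through $\phi$. Re-packaging the single surjection $\phi$ into the scheme $\{\psi(s)\}$ and choosing the representatives $y_s$ inside $\psi(s)$ is exactly what reconciles the two rules: both produce values lying in the same diameter-controlled piece $\psi(s_x|n)$, giving the uniform control needed for continuity.
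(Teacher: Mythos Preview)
Your proof is correct and close in spirit to the paper's, but packaged differently. Both start from a continuous surjection $\phi$ (the paper's $g$) of $\{0,1\}^{\N}$ onto $L$ and representatives $y_s\in\psi(s)$ (the paper's $t_s\in g(\langle s\rangle)$); the divergence is in how $f$ is produced. The paper sets $f_n=\sum_{|s|=n} t_s\,\mathbf{1}_{U_s}$, shows $(f_n)$ is Cauchy in $C(K)$ via the shrinking diameters, and takes $f=\lim f_n$, so continuity comes for free as a uniform limit. You instead define $f$ pointwise via the binary address $s_x$ and verify continuity by hand, splitting on whether $s_x$ is finite or infinite. What the paper's route buys is brevity: no case analysis is needed for continuity. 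What yours buys is an explicit and correct treatment of points with \emph{finite} address, i.e.\ those $x\in A_s\setminus(A_{s\hat{\;}0}\cup A_{s\hat{\;}1})$: you assign them $f(x)=y_{s_x}\in L$, whereas in the paper's construction such points receive $f_n(x)=0$ for all large $n$ and hence $f(x)=0$, which need not lie in $L$. The paper's check that $\operatorname{rng}(f)\subseteq L$ tacitly assumes every $x$ lies in some $U_s$ at each level, so your argument is in fact the more careful of the two on this point.
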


\begin{proof}
Let $g:\{0,1\}^\N\to L$ be a continuous mapping onto $L$. 
Let $\{U_s:s\in\{0,1\}^{<\N}\}$ be a Cantor scheme of clopen sets in $K$. For any $s\in\{0,1\}^{<\N}$ fix $t_s\in\R$ such that $t_s\in g(\langle s\rangle)$ (for example, $t_s=\min g(\langle s\rangle)$). For $n\in\N$ put
\[
f_n=\sum_{\vert s\vert=n}t_s\mathbf{1}_{U_s}.
\]
Clearly $f_n\in C(K)$. We will show that $(f_n)$ is a Cauchy sequence in $C(K)$ and its limit $f$ has the range equal to $L$. 

Let $\varepsilon>0$. By the continuity of $g$ there is $n$ such that $\operatorname{diam}(g(\langle s\rangle))<\varepsilon$ for every $s\in\{0,1\}^n$. Then for $m>n$ we have
\[
\Vert f_n-f_m\Vert = \max\{\vert t_s-t_{s'}\vert:\vert s\vert=n, \vert s'\vert=m \text{ and }s\preceq s'\}\leq\max_{\vert s\vert=n}\operatorname{diam}(g(\langle s\rangle))<\varepsilon.
\]
Therefore $(f_n)$ tends to some $f$ in $C(K)$. 

Let $t\in\operatorname{rng}(f)$. Fix $x\in K$ with $f(x)=t$. Then for any $\varepsilon>0$ there is $n$ such that $\vert f_n(x)-f(x)\vert<\varepsilon$. But $f_n(x)=t_s$ iff $x\in U_s$. Since $t_s\in L$ and $L$ is closed, then $t\in L$, and consequently $\operatorname{rng}(f)\subseteq L$.

Now, let $t\in L$. There is $\alpha\in\{0,1\}^\N$ with $g(\alpha)=t$. Put $s_n=\alpha\upharpoonright n$. By continuity of $g$ we obtain $\operatorname{diam}(g(\langle s_n\rangle))\to 0$, and therefore $t_{s_n}\to t$. Let $x\in\bigcap_{n\in\N}U_{s_n}$. Then $f_n(x)=t_{s_n}$. Thus $f(x)=t$. Hence $L\subseteq\operatorname{rng}(f)$. Finally $L=\operatorname{rng}(f)$.
\end{proof}

\begin{thm}\label{SpaceabilityFamilyPTheorem}
Let $K$ be a compact space. Let $\{U_\alpha:\alpha<\kappa\}$ be a family of pairwise disjoint non-empty clopen subsets of $K$ such that each $U_\alpha$ admits a Cantor scheme of clopen subsets. Assume that $\mathcal{P}$ is a family of compact subsets of $\R$ such that $\{0\}\notin\mathcal{P}$ and there is $P\in\mathcal{P}$ with the property that $\bigcup_{i\in\N}c_iP\cup\{0\}\in\mathcal{P}$ for every non-zero sequence $(c_n)$ tending to zero. 
Then the set
\[
\{f\in C(K):\operatorname{rng}(f)\in\mathcal{P}\}
\]
contains, up to zero function, an isometric copy of $c_0(\kappa)$. 
\end{thm}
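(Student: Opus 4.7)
The plan is to imitate the proof of Theorem \ref{TheoremOmegaRange}, using Proposition \ref{PropLargeRange} to replace its explicit step-function construction on each $U_\alpha$ by a single continuous function whose range is precisely the prescribed set $P$, and then to assemble these functions into a $c_0(\kappa)$-indexed system.

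First, for each $\alpha<\kappa$ I would apply Proposition \ref{PropLargeRange} to the compact clopen set $U_\alpha$ (which carries a Cantor scheme of clopen subsets by hypothesis) with $L=P$, obtaining $\tilde f_\alpha\in C(U_\alpha)$ with $\tilde f_\alpha(U_\alpha)=P$. Extending by zero off $U_\alpha$, which is continuous because $U_\alpha$ is clopen, produces $f_\alpha\in C(K)$. Set $M:=\max_{p\in P}|p|$, which is strictly positive since $\{0\}\notin\mathcal P$ forces $P\neq\{0\}$, and rescale to $g_\alpha:=f_\alpha/M$, so that $\|g_\alpha\|=1$ and $g_\alpha$ vanishes off $U_\alpha$ while $g_\alpha(U_\alpha)=P/M$.

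Next I would define $T\colon c_0(\kappa)\to C(K)$ by $T(x):=\sum_{\alpha<\kappa} x(\alpha)g_\alpha$. The support of $x$ is countable, so one can enumerate it as $\alpha_1,\alpha_2,\dots$; since the sets $U_{\alpha_n}$ are pairwise disjoint and $\|g_{\alpha_n}\|=1$ with $|x(\alpha_n)|\to 0$, the series converges uniformly on $K$. Disjointness of supports also yields $\|T(x)\|=\sup_n|x(\alpha_n)|=\|x\|_{c_0(\kappa)}$, so $T$ is a linear isometry and its image is a closed subspace of $C(K)$ isometric to $c_0(\kappa)$.

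Finally, I would verify that $\operatorname{rng}(T(x))\in\mathcal P$ for every $x\neq 0$. With $c_n:=x(\alpha_n)/M$, on $U_{\alpha_n}$ we have $T(x)=c_n\tilde f_{\alpha_n}$, contributing the compact set $c_nP$, and on $K\setminus\bigcup_n U_{\alpha_n}$ the function $T(x)$ is identically zero. A brief compactness argument shows this complement is non-empty: an infinite pairwise disjoint family of non-empty clopen sets cannot cover compact $K$, since the cover would admit no finite subcover; for finite support one uses that $\kappa$ is infinite (uncountable in the intended applications) so some $U_\beta$ is left over. Hence $\operatorname{rng}(T(x))=\bigcup_n c_nP\cup\{0\}$, and since $(c_n)$ (extended by zeros if finite) is a non-zero null sequence, the hypothesis on $\mathcal P$ gives $\operatorname{rng}(T(x))\in\mathcal P$. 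The most delicate step is precisely ensuring that $0$ genuinely appears in the range, so that the compact set $\operatorname{rng}(T(x))$ matches the formula $\bigcup_n c_nP\cup\{0\}$ from the hypothesis rather than only its closure.
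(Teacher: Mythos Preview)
Your proof is correct and follows essentially the same route as the paper's: construct $f_\alpha$ on each $U_\alpha$ via Proposition~\ref{PropLargeRange}, use the disjointness of the clopen supports to obtain an isometric copy of $c_0(\kappa)$, and identify the range of a general nonzero element as $\bigcup_n c_nP\cup\{0\}$ so that the hypothesis on $\mathcal P$ applies. Your explicit description of the isometry $T$ and the compactness argument ensuring that $0$ actually lies in the range are more detailed than the paper's version (which appeals to Lemma~\ref{lem1} and writes $\operatorname{rng}(g)=\bigcup_i c_i\operatorname{rng}(f_{\alpha_i})$, the latter already containing $0$ since each $f_{\alpha_i}$ vanishes off $U_{\alpha_i}$), but the underlying ideas are identical.
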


\begin{proof}
By Proposition \ref{PropLargeRange} there is a function $h\in C(U_\alpha)$ with $\operatorname{rng}(h)=P$. We define $f_\alpha\in C(K)$ such that $f_\alpha$ equals $h$ on $U_\alpha$, and zero on the complement of $U_\alpha$. By $V$ we denote $\overline{\operatorname{span}}\{f_\alpha:\alpha<\kappa\}$. Note that $\Vert f_\alpha-f_\beta\Vert=\max\{\vert t\vert: t\in P\}$ if $\alpha\neq\beta$, and therefore $V$ is a Banach space with density $\kappa$. 

Let $(g_n)$ be a sequence of function from $\operatorname{span}\{f_\alpha:\alpha<\kappa\}$ tending to $g$ in $C(K)$. By Lemma \ref{lem1} there is a countable set $\{\alpha_i:i\in\N\}\subseteq\kappa$ of ordinals such that $g$ equals $c_if_{\alpha_i}$ on $U_{\alpha_i}$'s for some $c_i\in\R$, vanishes on $K\setminus\bigcup_{i\in\N}U_{\alpha_i}$, and $\Vert g\upharpoonright U_{\alpha_i}\Vert\to 0$. This implies that $(c_i)$ tends to zero. If $g$ is not the zero function, then $c_i\neq 0$ for some $i$. Moreover, $\operatorname{rng}(g)=\bigcup_{i\in\N}c_i\operatorname{rng}(f_{\alpha_i})$. By the assumption on the family $\mathcal{P}$ we obtain that $\operatorname{rng}(g)\in\mathcal{P}$. 
\end{proof}

\begin{cor}\label{CorollaryLargeRange}
Let $K$ be a compact space. Let $\{U_\alpha:\alpha<\kappa\}$ be a family of pairwise disjoint non-empty clopen subsets of $K$ such that each $U_\alpha$ admits a Cantor scheme of clopen subsets. The following sets contain, up to zero function, an isometric copy of $c_0(\kappa)$:
\begin{itemize}
    \item[(i)] $\{f\in C(K):\operatorname{rng}(f)$ has cardinality continuum$\}$;
    \item[(ii)] $\{f\in C(K):\operatorname{rng}(f)$ is an interval$\}$;
    \item[(iii)] $\{f\in C(K):\operatorname{rng}(f)$ is homeomorphic to the Cantor set$\}$;
\end{itemize}
\end{cor}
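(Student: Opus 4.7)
The plan is to obtain each of the three statements by a direct application of Theorem \ref{SpaceabilityFamilyPTheorem}, choosing in each case an appropriate family $\mathcal{P}\subseteq\{L\subseteq\R:L\text{ compact}\}$ and a suitable witness $P\in\mathcal{P}$. In every case the condition $\{0\}\notin\mathcal{P}$ is immediate from the description of $\mathcal{P}$, so the only non-trivial task is to verify that $\bigcup_{i\in\N}c_iP\cup\{0\}\in\mathcal{P}$ for every non-zero null sequence $(c_i)$. A useful preliminary observation, valid for every compact $P\subseteq\R$ and every null sequence $(c_i)$, is that $\bigcup_{i\in\N}c_iP\cup\{0\}$ is automatically compact: it is bounded because $\sup_i|c_i|<\infty$ and $P$ is bounded, and if $t_n\to t$ with $t_n\in c_{i_n}P$, then either $i_n$ is eventually constant (so $t$ lies in that single $c_iP$ by closedness), or $i_n$ attains infinitely many values and $|t_n|\le|c_{i_n}|\cdot\max_{s\in P}|s|\to 0$, forcing $t=0$.

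For (i), I would take $\mathcal{P}$ to be the family of compact $L\subseteq\R$ with $|L|=\mathfrak{c}$ and $P=[0,1]$. Since some $c_{i_0}\neq 0$, the set $c_{i_0}P$ already has cardinality continuum, so the union does as well. For (ii), I would take $\mathcal{P}$ to be the family of non-degenerate compact intervals in $\R$ and $P=[-1,1]$. Because $c_i\to 0$, the supremum $M:=\sup_i|c_i|$ is actually a maximum, $c_iP=[-|c_i|,|c_i|]$ for every $i$, and therefore $\bigcup_{i\in\N}c_iP\cup\{0\}=[-M,M]$, a non-degenerate compact interval since $M>0$.

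For (iii), I would let $\mathcal{P}$ consist of compact subsets of $\R$ homeomorphic to the Cantor set and take $P=C$, the standard middle-thirds Cantor set. The strategy is to verify the Brouwer characterization: any non-empty, compact, metric, perfect, totally disconnected space is homeomorphic to $C$. Compactness is handled by the preliminary observation, and the metric condition is automatic in $\R$. For total disconnectedness, each $c_iC$ is nowhere dense in $\R$, so $\bigcup_{i\in\N}c_iC\cup\{0\}$ is meager in $\R$ by Baire, therefore has empty interior, therefore contains no non-trivial interval, and hence is totally disconnected as a subspace of $\R$. For perfectness, any point of $c_iC$ with $c_i\neq 0$ is a limit of other points of $c_iC$ because $c_iC$ itself is perfect, while $0$ is approached by the sequence $(c_i\cdot 1)_{i\in\N}\subseteq\bigcup_{i}c_iC$, which tends to $0$ since $1\in C$ and $c_i\to 0$.

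I expect (iii) to be the only real obstacle, since the Brouwer characterization requires the two independent observations that $\bigcup c_iC$ is meager in $\R$ (so totally disconnected) and that $0$ is genuinely an accumulation point (so the set is perfect at $0$). Parts (i) and (ii) then reduce to the elementary facts used above once the general compactness argument is recorded.
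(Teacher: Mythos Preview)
Your approach is essentially the paper's: apply Theorem \ref{SpaceabilityFamilyPTheorem} with a concrete $P$ in each case and verify the closure property via the Brouwer characterization in (iii). The only cosmetic differences are your choice $P=[-1,1]$ in (ii) (the paper uses $[0,1]$) and your use of Baire category rather than Lebesgue measure to get empty interior in (iii); both variants work.

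There is, however, one small gap. Your argument that $0$ is not isolated in $E=\bigcup_i c_iC\cup\{0\}$ relies on the sequence $(c_i\cdot 1)_i$ tending to $0$, but Theorem \ref{SpaceabilityFamilyPTheorem} only guarantees that \emph{some} $c_i$ is non-zero, not that infinitely many are; if only finitely many $c_i$ are non-zero, your approximating sequence is eventually $0$ and does not witness non-isolation. The fix (which is what the paper does) is to observe that $0\in C$, so $0\in c_{i_0}C$ for any $c_{i_0}\neq 0$, and $c_{i_0}C$ is already perfect. A similar remark applies to your compactness dichotomy: ``$i_n$ eventually constant'' versus ``$i_n$ attains infinitely many values'' is not exhaustive (e.g.\ $1,2,1,2,\dots$); the clean split is ``some value of $i$ occurs infinitely often'' versus ``$i_n\to\infty$'', after passing to a subsequence.
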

\begin{proof} Fix a non-zero sequence $(c_n)$ tending to zero. 



(i) and (ii) Let $P=[0,1]$. Then $\bigcup_{i\in\N}c_i[0,1]$ is an interval.

(iii) Let $P$ be the ternary Cantor set. Then $E:=\bigcup_{i\in\N}c_iP\cup\{0\}$ has measure zero, so it has empty interior, and therefore it is zero-dimensional. Note that $E$  contains at least one scaled copy of the ternary Cantor set. Zero is not isolated in this copy. Thus no point of $c_iP$ is isolated in $E$. Thus $E$ as a compact, dense-in-itself, zero-dimensional space is homeomorphic to the Cantor set. 
\end{proof}

One can ask if there is an uncountable compact space $K$ such that the assumption of  
Theorem \ref{TheoremOmegaRange} is satisfied for some uncountable $\kappa$, but the assertion of  Theorem \ref{SpaceabilityFamilyPTheorem}
is not true. A simple example of such space is $\omega_1+1$ -- the space of all ordinal numbers $\alpha$ such that $0\leq\alpha\leq\omega_1$ with the order topology. Then the family $\{\{\alpha\}:\alpha<\omega_1$ is a successor ordinal$\}$ with
cardinality $\omega_1$ and consists of clopen sets. On the other hand, every continuous function $f:\omega_1+1\to\R$ has a countable range. Thus the assertion of Theorem \ref{SpaceabilityFamilyPTheorem}
does not hold for any $\mathcal{P}$ consisting of uncountable compact subsets of reals.

\section{Ideals on natural numbers}\label{SectionIdeals}

In this section we will show that there are natural examples of compact spaces for which the assumptions of Theorem \ref{TheoremOmegaRange} and Theorem \ref{SpaceabilityFamilyPTheorem} are fulfilled. This spaces are defined using ideals $\mathcal{I}$ on natural numbers. More precisely by $P_\mathcal{I}$ we denote the compact space of all ultrafilters which contain a filter $\mathcal{I}^*$ dual to $\mathcal{I}$. It turns out that if $\mathcal{I}$ has the Baire property, then $P_\mathcal{I}$ fulfills the assumptions of Theorem \ref{TheoremOmegaRange}, and if $\mathcal{I}$ has the hereditary Baire property, then $P_\mathcal{I}$ fulfills the assumptions of Theorem \ref{SpaceabilityFamilyPTheorem}. If $\mathcal{I}=\operatorname{Fin}$ that is the ideal of finite sets, then $P_\mathcal{I}=\beta\N\setminus\N$. Moreover $\operatorname{Fin}$ is an $F_\sigma$ ideal, and all Borel ideals have the hereditary Baire property. Therefore by Corollary \ref{CorollaryLargeRange} and Theorem \ref{TheoremOmegaRange} we obtain that both $\operatorname{L}(\mathfrak c)$ and $\operatorname{L}(\omega)$ contain, up to zero function, closed subspaces of density continuum. 

An ideal $\mathcal{I}$ on $\N$ is a family of subsets of $\N$ which is closed under taking finite unions and subsets, does not contain the whole space $\N$ and contains all singletons. 

Given a sequence $(x_n)\in\ell_\infty$ we say that $t$ is its $\mathcal{I}$-cluster point, or $t\in\operatorname{L}_{\mathcal{I}}(x_n)$, if $\{n\in\N:\vert x_n-t\vert<\varepsilon\}\notin\mathcal{I}$ for every $\varepsilon>0$. It is known that the set of all limit points $\operatorname{L}_{\mathcal{I}}(x_n)$ is non-empty compact subset of real numbers for any bounded sequence $(x_n)$. It is a natural question whether the lineability results for subsets of whose sequences $(x_n)$ in $\ell_\infty$ with prescribed set $\operatorname{L}(x_n)$ can be proved when the set of ideal limit points $\operatorname{L}_{\mathcal{I}}(x_n)$ is prescribed.

By $c_0(\mathcal{I})$ we denote all sequences from $\ell_\infty$ which are $\mathcal{I}$-convergent to zero. Let us note that $\operatorname{L}_{\mathcal{I}}(x-y)=\operatorname{L}_{\mathcal{I}}(x)$ for any $x\in\ell_\infty$ and $y\in c_0(\mathcal{I})$. To see it fix $t\in\operatorname{L}_{\mathcal{I}}(x-y)$ and $\varepsilon>0$. Then $A_\varepsilon:=\{n\in\N:\vert(x_n-y_n)-t\vert<\varepsilon/3\}\notin\mathcal{I}$ and $B_\varepsilon:=\{n\in\N:\vert y_n\vert>\varepsilon/3\}\in\mathcal{I}$. Thus $A_\varepsilon\setminus B_\varepsilon\notin\mathcal{I}$. For any $n\in A_\varepsilon\setminus B_\varepsilon$ we have $\vert x_n-t\vert\leq \vert(x_n-y_n)-t\vert+\vert y_n\vert<\varepsilon$. Therefore $t\in\operatorname{L}(x)$, and consequently $\operatorname{L}_{\mathcal{I}}(x-y)\subseteq\operatorname{L}_{\mathcal{I}}(x)$. Since this is true for any $x\in\ell_\infty$ and $y\in c_0(\mathcal{I})$, we have $\operatorname{L}_{\mathcal{I}}(x)=\operatorname{L}_{\mathcal{I}}((x-y)-(-y))\subseteq\operatorname{L}_{\mathcal{I}}(x-y)$.

An ultrafilter $\mathcal{U}$ on $\N$ is called proper if it does not contain any finite set. Having ideal $\mathcal{I}$, by $\mathcal{I}^*$ we denote its dual filter, that is the family $\{\N\setminus A:A\in\mathcal{I}\}$.   
Let $P_{\mathcal{I}}$ denote the set of all proper ultrafilters on $\N$ containing $\mathcal{I}^*$. It turns out that $P_{\mathcal{I}}$ is a closed subset of compact space $\beta\N\setminus\N$. By $C(P_\mathcal{I})$ we denote the Banach space of all continuous function on $P_\mathcal{I}$ with the supremum norm. In \cite{BGW} it is proved that the quotient space $\ell_\infty/c_0(\mathcal{I})$ is isometrically isomorphic to $C(P_\mathcal{I})$\footnote{More precisely, in \cite{BGW} the Authors considered space $\ell_\infty(\mathcal{I})$ consisting of all $\mathcal{I}$-bounded sequences, i.e. sequences $x$ for which there is $K\in\mathcal{I}^*$ such that $x$ restricted to $K$ is bounded in usual sense. On $\ell_\infty(\mathcal{I})$ was considered a seminorm $\Vert x\Vert_\infty^\mathcal{I}$ defined as $\inf\{\lambda>0:(\exists K\in\mathcal{I}^*)(\forall n\in K)\vert x(n)\vert\leq\lambda\}$. It is immediate that a representative of any equivalence class in $\ell_\infty(\mathcal{I})/c_0(\mathcal{I})$ can be chosen from $\ell_\infty$ and quotient norm on $\ell_\infty(\mathcal{I})/c_0(\mathcal{I})$and $\ell_\infty/c_0(\mathcal{I})$ coincide for equivalence classes given by common representatives. Therefore we identify $\ell_\infty(\mathcal{I})/c_0(\mathcal{I})$and $\ell_\infty/c_0(\mathcal{I})$, and use the latter for simplicity.}. 

The following lemma links the properties of an ideal $\mathcal{I}$ with the properties of a Banach space $C(P_\mathcal{I})$. We say that two subsets $A$ and $B$ of $\N$ are $\mathcal{I}$-almost disjoint if $A\cap B\in\mathcal{I}$.

\begin{lem}\label{IdealAlmostDisjointLemma}
Let $\mathcal{I}$ be an ideal on $\N$. 
\begin{itemize}
    \item[(a)] If $A$ and $B$ subsets of $\N$ are $\mathcal{I}$-almost disjoint, then $\langle A\rangle\cap P_\mathcal{I}$ and $\langle B\rangle\cap P_\mathcal{I}$ are disjoint clopen subsets of $P_\mathcal{I}$.
    \item[(b)] If $\{A_\alpha:\alpha<\kappa\}$ is a family of $\mathcal{I}$-almost disjoint subsets of $\N$, then $\{\langle A_\alpha\rangle\cap P_{\mathcal{I}}:\alpha<\kappa\}$ is a family of pairwise disjoint clopen subsets of $P_{\mathcal{I}}$.    
    \item[(c)] If $\{A_s:s\in\{0,1\}^{<\N}\}$ is a family of subsets of $\N$ such that $A_{\emptyset}=\N$, $A_s\notin\mathcal{I}$, , $A_{s\hat{\;}i}\subset A_s$ ($i=0,1$) and $A_{s\hat{\;}0}\cap A_{s\hat{\;}1}=\emptyset$ for every $s$, then the family $\{\langle A_s\rangle\cap P_\mathcal{I}:s\in\{0,1\}^{<\N}\}$ is a Cantor scheme of clopen subsets of $P_\mathcal{I}$. 
\end{itemize}
\end{lem}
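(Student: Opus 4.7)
The plan is to prove each of (a), (b), (c) by direct manipulation of the ultrafilter description of $P_\mathcal{I}$, using two key facts already at hand: $\mathcal{U}\in\langle A\rangle$ iff $A\in\mathcal{U}$, and $\mathcal{U}\in P_\mathcal{I}$ iff $\mathcal{I}^*\subseteq\mathcal{U}$.

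For (a), clopenness of $\langle A\rangle\cap P_\mathcal{I}$ and $\langle B\rangle\cap P_\mathcal{I}$ inside $P_\mathcal{I}$ is inherited from the clopenness of $\langle A\rangle,\langle B\rangle$ in $\beta\N\setminus\N$ noted in Section \ref{SectionTranslation}, since $P_\mathcal{I}$ carries the subspace topology. For disjointness I would argue by contradiction: if $\mathcal{U}$ lay in the intersection, then $A,B\in\mathcal{U}$ would force $A\cap B\in\mathcal{U}$, while $A\cap B\in\mathcal{I}$ would give $\N\setminus(A\cap B)\in\mathcal{I}^*\subseteq\mathcal{U}$, so $\mathcal{U}$ would contain two disjoint sets, contradicting the filter property. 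Part (b) is then an immediate corollary, applying (a) to each pair $A_\alpha,A_\beta$ with $\alpha\neq\beta$.

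For (c) I would verify the three Cantor scheme axioms in turn. Axiom (i) holds because $A_\emptyset=\N$ lies in every ultrafilter, so $\langle A_\emptyset\rangle\cap P_\mathcal{I}=P_\mathcal{I}$. Axiom (ii) is upward-closure of ultrafilters: $A_{s\hat{\;}i}\subseteq A_s$ gives $\langle A_{s\hat{\;}i}\rangle\cap P_\mathcal{I}\subseteq\langle A_s\rangle\cap P_\mathcal{I}$. Axiom (iii) is covered by part (a) since $A_{s\hat{\;}0}\cap A_{s\hat{\;}1}=\emptyset\in\mathcal{I}$. What still needs attention is non-emptiness of each $\langle A_s\rangle\cap P_\mathcal{I}$. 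For this I would show that $\mathcal{I}^*\cup\{A_s\}$ has the finite intersection property; applying the ultrafilter lemma then yields some $\mathcal{U}\in\langle A_s\rangle\cap P_\mathcal{I}$. To see the finite intersection property, given $K\in\mathcal{I}^*$, if $K\cap A_s$ were in $\mathcal{I}$ then $A_s=(K\cap A_s)\cup(A_s\setminus K)$ would be a union of two members of $\mathcal{I}$, namely $K\cap A_s$ and a subset of $\N\setminus K\in\mathcal{I}$, hence itself in $\mathcal{I}$, contradicting the standing hypothesis $A_s\notin\mathcal{I}$.

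I do not expect any serious obstacle; this is essentially bookkeeping with ultrafilters. The only point of real substance is the non-emptiness argument in (c), where the assumption $A_s\notin\mathcal{I}$ must be invoked, since clopenness of $\langle A_s\rangle$ in $\beta\N\setminus\N$ by itself does not guarantee that its trace on the closed subspace $P_\mathcal{I}$ is non-empty.
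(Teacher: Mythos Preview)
Your proof is correct and follows essentially the same approach as the paper's: the contradiction argument for (a) is identical, and (b), (c) are derived from it. You are in fact more thorough than the paper, which simply declares that (b) and (c) ``follow immediately from (a)'', whereas you explicitly verify the Cantor scheme axioms in (c), including the non-emptiness of each $\langle A_s\rangle\cap P_\mathcal{I}$ via the finite intersection property---a step the paper leaves implicit.
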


\begin{proof}
To prove (a) suppose that there is an ultrafilter $\mathcal{U}$ in $\langle A\rangle\cap\langle B\rangle\cap P_\mathcal{I}$. Then $A,B\in\mathcal{U}$ and $\mathcal{I}^*\subseteq\mathcal{U}$. Then by the definition of ultrafilter, $A\cap B\in\mathcal{U}$. Since $A$ and $B$ are $\mathcal{I}$-almost disjoint, then $A\cap B\in\mathcal{I}$. But $\mathcal{I}^*\subseteq\mathcal{U}$ implies that $\N\setminus (A\cap B)\in\mathcal{U}$, and consequently the set $A\cap B$ and its complement belong to $\mathcal{U}$ which is a contradiction. 
Parts (b) and (c) follow immediately from (a). 
\end{proof}


Ideals on $\N$ can be viewed (via the mapping $\N\supseteq A\mapsto\mathbf{1}_A\in\{0,1\}^\N$) as subsets of the Cantor space $\{0,1\}^\N$. Therefore they may have the Baire property, be Borel, $F_\sigma$ etc. It is well-known characterization of ideals with the Baire property due to Jalali-Naini \cite{J} and Talagrand \cite{T}: ideal $\mathcal I$ on $\N$ has the Baire property if and only if there is an infinite sequences $n_1<n_2<n_3<\dots$ of indices such that no member of $\mathcal{I}$ contains infinitely many intervals $[n_i,n_{i+1})$ ($=\{n_i,n_i+1,\dots,n_{i+1}-1\}$. Let $X\subseteq\N$ be such that $X\notin\mathcal{I}$. Then $\mathcal{I}\upharpoonright X:=\{A\cap X:A\in\mathcal{I}\}$ is an ideal on a countable infinite set $X$. One can consider the Baire property of $\mathcal{I}\upharpoonright X$ as a subset of $\{0,1\}^X$. We say that $\mathcal{I}$ has the hereditary Baire property, provided $\mathcal{I}\upharpoonright X$ has the Baire property for every $X\notin\mathcal{I}$. Note that Borel ideals have the Baire property, and $\mathcal{I}\upharpoonright X$ is a closed subset of $\mathcal{I}$. Therefore Borel ideals have the hereditary Baire property.

Let us note the following observation (Proposition \ref{PropositionBaire} (a) is a piece of mathematical folklore, but we add its short proof for the sake of completeness, see for example \cite[Lemma 2.3]{L}).  

\begin{prop}\label{PropositionBaire}
Let $\mathcal{I}$ be an ideal on $\N$. 
\begin{itemize}
    \item[(a)] If $\mathcal{I}$ has the Baire property, then there is $\mathcal{I}$-almost disjoint family with cardinality continuum.  
    \item[(b)] If $\mathcal{I}$ has the hereditary Baire property, then for every $X\notin\mathcal{I}$ there is a family $\{A_s:s\in\{0,1\}^{<\N}\}$ of subsets of $X$ such that $A_{\emptyset}=X$, $A_s\notin\mathcal{I}$, $A_{s\hat{\;}i}\subset A_s$ ($i=0,1$) and $A_{s\hat{\;}0}\cap A_{s\hat{\;}1}=\emptyset$ for every $s$.
\end{itemize}
\end{prop}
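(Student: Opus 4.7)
The plan is to invoke the Jalali--Naini--Talagrand characterization quoted just above the proposition and convert the resulting combinatorial data into, respectively, an $\mathcal{I}$-almost disjoint family of size continuum (for part (a)) and a Cantor-scheme-like family of $\mathcal{I}$-positive sets (for part (b)). Both arguments are combinatorial at their core and rely on indexing by the full binary tree $\{0,1\}^{<\N}$.

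For part (a), apply the Jalali--Naini--Talagrand theorem to $\mathcal{I}$ itself to obtain an increasing sequence $n_1<n_2<\dots$ such that no element of $\mathcal{I}$ contains infinitely many of the pairwise disjoint finite intervals $I_k:=[n_k,n_{k+1})$. Fix any bijection $t\mapsto k(t)$ between $\{0,1\}^{<\N}$ and $\N$. For each branch $\alpha\in\{0,1\}^{\N}$ set
\[
X_\alpha:=\bigcup_{n\in\N}I_{k(\alpha\upharpoonright n)}.
\]
Since $\{k(\alpha\upharpoonright n):n\in\N\}$ is infinite, $X_\alpha$ contains infinitely many $I_k$'s and hence $X_\alpha\notin\mathcal{I}$. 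If $\alpha\neq\beta$, then $\alpha\upharpoonright n\neq\beta\upharpoonright n$ for all sufficiently large $n$, so $X_\alpha\cap X_\beta$ is contained in a finite union of the $I_k$'s, which is a finite set and therefore belongs to $\mathcal{I}$. Thus $\{X_\alpha:\alpha\in\{0,1\}^{\N}\}$ is an $\mathcal{I}$-almost disjoint family of cardinality continuum.

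For part (b), fix $X\notin\mathcal{I}$ and identify $X$ with $\N$ via its order isomorphism. By the hereditary Baire property, $\mathcal{I}\upharpoonright X$ has the Baire property, so Jalali--Naini--Talagrand applied to this ideal yields a partition of $X$ into finite blocks $J_1,J_2,\dots$ such that no member of $\mathcal{I}\upharpoonright X$ contains infinitely many $J_k$'s. Enumerate $\{0,1\}^{<\N}=\{t_1,t_2,\dots\}$ and, for each $s\in\{0,1\}^{<\N}$, put
\[
A_s:=\bigcup\{J_k:s\preceq t_k\}.
\]
Then $A_\emptyset=\bigcup_k J_k=X$; $A_{s\hat{\;}i}\subseteq A_s$ since $s\hat{\;}i\preceq t_k$ implies $s\preceq t_k$; and $A_{s\hat{\;}0}\cap A_{s\hat{\;}1}=\emptyset$ because any $t_k$ extending $s$ extends exactly one of $s\hat{\;}0,s\hat{\;}1$. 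Finally $\{k:s\preceq t_k\}$ is infinite, so $A_s\subseteq X$ contains infinitely many $J_k$'s and therefore $A_s\notin\mathcal{I}\upharpoonright X$, i.e.\ $A_s\notin\mathcal{I}$.

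There is no real obstacle: both parts reduce to a tree-indexing trick once the Jalali--Naini--Talagrand partition has been extracted. The only subtlety worth highlighting is that in (b) one must apply the theorem to $\mathcal{I}\upharpoonright X$ rather than to $\mathcal{I}$, which is precisely where the \emph{hereditary} Baire property enters, in contrast to (a) where the plain Baire property suffices.
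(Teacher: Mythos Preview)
Your proof is correct and follows essentially the same route as the paper: both parts extract the Jalali--Naini--Talagrand interval partition and then feed it through a binary-tree indexing to produce, respectively, an $\mathcal{I}$-almost disjoint family of size $\mathfrak{c}$ and a Cantor scheme of $\mathcal{I}$-positive sets. The only cosmetic differences are that in (a) you construct the almost disjoint family of indices explicitly via branches of $\{0,1\}^{<\N}$ whereas the paper simply quotes its existence, and in (b) you use the cone $\{k:s\preceq t_k\}$ directly whereas the paper first builds an abstract tree of infinite index sets and then inflates; one tiny imprecision is that ``exactly one'' in your disjointness argument should read ``at most one'' (the node $t_k=s$ extends neither $s\hat{\;}0$ nor $s\hat{\;}1$), but this does not affect the conclusion.
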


\begin{proof}
Assume that $\mathcal{I}$ has the Baire property. Let $\mathcal{A}$ be a family of cardinality continuum consisting of almost disjoint subsets of $\N$. Then the family $\{\bigcup_{i\in A}[n_i,n_{i+1}):A\in\mathcal{A}\}$ has cardnality continuum and consists of $\mathcal{I}$-almost disjoint subsets of $\N$. This proves (a). 

One can define by simple induction a family $\{A_s:s\in\{0,1\}^{<\N}\}$ of subsets of $\N$ such that $A_{\emptyset}=\N$, $A_s$ if infinite, $A_{s\hat{\;}i}\subset A_s$ ($i=0,1$) and $A_{s\hat{\;}0}\cap A_{s\hat{\;}1}=\emptyset$ for every $s$. Then the family $\{\bigcup_{i\in A_s}[n_i,n_{i+1}):s\in\{0,1\}^{<\N}\}$ has the desired property for $X=\N$. Using the hereditary Baire property we can repeat the reasoning for every $X\notin\mathcal{I}$ to get (b).
\end{proof}
Now, we are ready to prove the following.
\begin{thm}\label{MainTheorem}
Let $\mathcal{I}$ be an ideal on $\N$. \begin{itemize}
    \item[(i)] If $\mathcal{I}$ has the Baire property, then $\operatorname{L}_{\mathcal{I}}(\omega)$ contains, up to zero function, a closed subspace of density continuum.
    \item[(ii)] If $\mathcal{I}$ has the hereditary Baire property, then $\operatorname{L}_{\mathcal{I}}(\mathfrak c)$ contains, up to zero function, a closed subspace of density continuum.
\end{itemize}
\end{thm}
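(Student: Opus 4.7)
The plan is to reduce both parts of Theorem~\ref{MainTheorem} to the already-established $C(K)$-lineability theorems, applied with $K=P_\mathcal{I}$, and then transfer the resulting isometric copy of $c_0(\mathfrak c)$ back to $\ell_\infty$ via the isometric isomorphism $\ell_\infty/c_0(\mathcal{I})\cong C(P_\mathcal{I})$ and an ideal analogue of Proposition~\ref{PropSpaceability}. Propositions~\ref{PropositionBaire}, Lemma~\ref{IdealAlmostDisjointLemma}, and the Jalali--Naini/Talagrand characterisation of the Baire property are exactly the tools needed to verify, from the hypothesis on $\mathcal{I}$, the topological hypotheses of Theorem~\ref{TheoremOmegaRange} for (i) and of Corollary~\ref{CorollaryLargeRange}(i) for (ii) at the space $P_\mathcal{I}$.

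For part (i) I would proceed as follows. Start from an $\mathcal{I}$-almost disjoint family $\{A_\alpha:\alpha<\mathfrak c\}$ of $\mathcal{I}$-positive subsets of $\N$ supplied by Proposition~\ref{PropositionBaire}(a). Using the Jalali--Naini blocks $[n_i,n_{i+1})$ sitting inside $A_\alpha$, split each $A_\alpha$ into a disjoint sequence of $\mathcal{I}$-positive pieces $\{A_\alpha^k:k<\omega\}$; the Talagrand condition guarantees that every $A_\alpha^k$ stays out of $\mathcal{I}$, and the combined family $\{A_\alpha^k:\alpha<\mathfrak c,\ k<\omega\}$ is still pairwise $\mathcal{I}$-almost disjoint. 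Lemma~\ref{IdealAlmostDisjointLemma}(b) then produces $\mathfrak c\cdot\omega=\mathfrak c$ many pairwise disjoint nonempty clopen sets $\langle A_\alpha^k\rangle\cap P_\mathcal{I}$ in $P_\mathcal{I}$, so Theorem~\ref{TheoremOmegaRange} delivers an isometric copy of $c_0(\mathfrak c)$ inside $\operatorname{L}_{P_\mathcal{I}}(\omega)\cup\{0\}\subseteq C(P_\mathcal{I})$.

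Part (ii) follows the same pattern but feeds a Cantor-scheme input into Corollary~\ref{CorollaryLargeRange}(i). Fix $\{X_\alpha:\alpha<\mathfrak c\}$ as in Proposition~\ref{PropositionBaire}(a); because $\mathcal{I}$ has the hereditary Baire property, Proposition~\ref{PropositionBaire}(b) may be applied inside each individual $X_\alpha$, producing a binary tree $\{X_{\alpha,s}:s\in\{0,1\}^{<\N}\}$ of $\mathcal{I}$-positive subsets of $X_\alpha$ whose siblings are disjoint and whose children are contained in the parent. Lemma~\ref{IdealAlmostDisjointLemma}(b)--(c) then tells us that $\{\langle X_\alpha\rangle\cap P_\mathcal{I}:\alpha<\mathfrak c\}$ is a pairwise disjoint family of clopen subsets of $P_\mathcal{I}$, and that each member carries a Cantor scheme of clopen subsets $\langle X_{\alpha,s}\rangle\cap P_\mathcal{I}$. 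Hence Corollary~\ref{CorollaryLargeRange}(i) yields an isometric copy of $c_0(\mathfrak c)$ inside $\{f\in C(P_\mathcal{I}):|\operatorname{rng}(f)|=\mathfrak c\}\cup\{0\}$.

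The final step, and the main obstacle, is the transfer back to $\ell_\infty$. Proposition~\ref{PropSpaceability} is stated for $\beta\N\setminus\N$, but its proof adapts verbatim once $c_0$ is replaced by $c_0(\mathcal{I})$ and $\beta\N\setminus\N$ by $P_\mathcal{I}$, since the isometric isomorphism $\ell_\infty/c_0(\mathcal{I})\to C(P_\mathcal{I})$ from \cite{BGW} again sends $\hat{x}$ to $\mathcal{U}\mapsto \mathcal{U}\text{-}\lim x$ and the ideal analogue of Proposition~\ref{PropRngAndLim} gives $\operatorname{rng}(f_{\hat{x}})=\operatorname{L}_{\mathcal{I}}(x)$. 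Concretely, one lifts each generator $f_\alpha\in C(P_\mathcal{I})$ to the natural representative $x_\alpha\in\ell_\infty$ built from the sets $A_\alpha^k$ (for (i)) or $X_{\alpha,s}$ (for (ii)), and checks that the $\ell_\infty$-closed linear span of $\{x_\alpha:\alpha<\mathfrak c\}$ is the desired closed subspace of density $\mathfrak c$. The delicate bookkeeping is to ensure that this lifted span meets $c_0(\mathcal{I})$ only at $0$, so that no nonzero element of the span lies outside $\operatorname{L}_{\mathcal{I}}(\omega)$ (resp.\ $\operatorname{L}_{\mathcal{I}}(\mathfrak c)$); this is the point where the pairwise $\mathcal{I}$-almost disjointness of the supports is used to force the quotient norm on finite combinations to agree with the $C(P_\mathcal{I})$-norm, so that convergence in $\ell_\infty$ forces convergence of the coefficient tuples in $c_0(\mathfrak c)$ and thereby identifies the $\ell_\infty$-closure with its image in $C(P_\mathcal{I})$.
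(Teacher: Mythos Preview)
Your proposal follows essentially the same route as the paper. The paper's own proof is extremely terse: it simply remarks that the results of Section~\ref{SectionTranslation} (in particular Proposition~\ref{PropRngAndLim} and Proposition~\ref{PropSpaceability}) remain valid with $C(\beta\N\setminus\N)$, $c_0$, $\operatorname{Lim}$ replaced by $C(P_\mathcal{I})$, $c_0(\mathcal{I})$, $\operatorname{L}_\mathcal{I}$, and then invokes Proposition~\ref{PropositionBaire}, Lemma~\ref{IdealAlmostDisjointLemma}, Theorem~\ref{TheoremOmegaRange} (for (i)) and Theorem~\ref{SpaceabilityFamilyPTheorem} (for (ii)). Your reduction to Theorem~\ref{TheoremOmegaRange} and Corollary~\ref{CorollaryLargeRange}(i), and your ideal version of the transfer Proposition~\ref{PropSpaceability}, are exactly this argument spelled out in more detail.

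Two small remarks. First, the explicit splitting of each $A_\alpha$ into $\omega$ many $\mathcal{I}$-positive pieces in part~(i) is not needed: since $\mathfrak c\cdot\omega=\mathfrak c$, a single $\mathcal{I}$-almost disjoint family of size $\mathfrak c$ can simply be re-indexed by $\mathfrak c\times\omega$, which is all Theorem~\ref{TheoremOmegaRange} requires. Second, your final paragraph on the transfer back to $\ell_\infty$ is more careful than the paper, which just asserts that the Section~\ref{SectionTranslation} arguments go through verbatim; you are right that the content of the ideal version of Proposition~\ref{PropSpaceability} is that the lifted closed span meets $c_0(\mathcal{I})$ only in $0$, so that the quotient map is injective on it and every nonzero element has $\mathcal{I}$-cluster set in the prescribed family.
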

\begin{proof}
First, note that if you replace $C(\beta\N\setminus\N)$ with $C(P_{\mathcal{I}})$ and $\operatorname{L}(x)$ with $\operatorname{L}_{\mathcal{I}}(x)$, then the results in Section \ref{SectionTranslation} remain true. Then using Proposition \ref{PropositionBaire}, Lemma \ref{IdealAlmostDisjointLemma} and Theorem \ref{TheoremOmegaRange} and Theorem \ref{SpaceabilityFamilyPTheorem}, we obtain the assertion of Theorem \ref{MainTheorem}.        
\end{proof}

Since Borel ideals have the hereditary Baire property and the ideal of finite sets is an $F_\sigma$ ideal, then we obtain the following. 
\begin{cor}\label{CorollaryMainTheorem}
    The following sets contain, up to zero function, a Banach space of density $\mathfrak c$:
    \begin{itemize}
        \item[(i)] $\{x\in\ell_\infty:\operatorname{Lim}(x)$ has cardinality continuum$\}$;
        \item[(ii)] $\{x\in\ell_\infty:\operatorname{Lim}(x)$ is an interval$\}$;
        \item[(iii)] $\{x\in\ell_\infty:\operatorname{Lim}(x)$ is homeomorphic to the Cantor set$\}$
    \end{itemize}
\end{cor}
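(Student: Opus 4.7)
The plan is to reduce all three parts uniformly to Corollary \ref{CorollaryLargeRange}, via the ideal-translation framework of Section \ref{SectionIdeals} applied to $\mathcal{I}=\operatorname{Fin}$. With this choice we have $P_{\operatorname{Fin}}=\beta\N\setminus\N$, $c_0(\operatorname{Fin})=c_0$, and $\operatorname{L}_{\operatorname{Fin}}(x)=\operatorname{Lim}(x)$, so it suffices to produce, for each of the three families $\mathcal{P}$ of compact subsets of $\R$ in question, a closed subspace of $C(\beta\N\setminus\N)$ of density $\mathfrak{c}$ living (up to zero) inside $\{f\in C(\beta\N\setminus\N):\operatorname{rng}(f)\in\mathcal{P}\}$, and then transfer it back to $\ell_\infty$.

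First I would check that $\operatorname{Fin}$ has the hereditary Baire property. As a countable subset of $\{0,1\}^\N$ it is $F_\sigma$, hence Borel; moreover for every infinite $X\subseteq\N$ the restriction $\operatorname{Fin}\!\upharpoonright\! X$ is again the ideal of finite subsets of the countable set $X$, so it too is Borel and in particular has the Baire property. Consequently Proposition \ref{PropositionBaire} applies to every $X\notin\operatorname{Fin}$.

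Next, using Proposition \ref{PropositionBaire}(a) I would fix a family $\{A_\alpha:\alpha<\mathfrak{c}\}$ of pairwise $\operatorname{Fin}$-almost-disjoint infinite subsets of $\N$, and for each $\alpha$ apply Proposition \ref{PropositionBaire}(b) to $\operatorname{Fin}\!\upharpoonright\! A_\alpha$ to extract a tree $\{A_{\alpha,s}:s\in\{0,1\}^{<\N}\}$ of infinite subsets of $A_\alpha$ with $A_{\alpha,\emptyset}=A_\alpha$, $A_{\alpha,s\hat{\;}i}\subset A_{\alpha,s}$ and $A_{\alpha,s\hat{\;}0}\cap A_{\alpha,s\hat{\;}1}=\emptyset$. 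Parts (b) and (c) of Lemma \ref{IdealAlmostDisjointLemma} then supply $\mathfrak{c}$ pairwise disjoint non-empty clopen sets $U_\alpha=\langle A_\alpha\rangle$ in $\beta\N\setminus\N$, each equipped with a Cantor scheme of clopen subsets $\{\langle A_{\alpha,s}\rangle:s\in\{0,1\}^{<\N}\}$.

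Finally I would invoke Corollary \ref{CorollaryLargeRange}(i), (ii), (iii) with $K=\beta\N\setminus\N$ and $\kappa=\mathfrak{c}$ to get, in each of the three corresponding subsets of $C(\beta\N\setminus\N)$, an isometric copy of $c_0(\mathfrak{c})$, which is a closed subspace of density $\mathfrak{c}$; and then transfer these subspaces back to $\ell_\infty$ using Proposition \ref{PropSpaceability}, taking for $\mathcal{P}$ in turn the family of compact subsets of $\R$ of cardinality $\mathfrak{c}$, the family of nondegenerate compact intervals, and the family of compact sets homeomorphic to the Cantor set (none of which contains $\{0\}$). Proposition \ref{PropRngAndLim} converts the range condition on $f_{\hat{x}}$ into the cluster-set condition on $x$, yielding the three statements. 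The only step that requires any genuine thought is the verification that $\operatorname{Fin}$ has the hereditary Baire property, and this is the mild obstacle; once it is in hand, everything else is a direct assembly of the results of Sections \ref{SectionTranslation}--\ref{SectionIdeals}.
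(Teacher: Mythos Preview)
Your argument is correct and is essentially the paper's own proof, spelled out in more detail: the paper invokes Theorem \ref{MainTheorem} for (i) and Proposition \ref{PropositionBaire}, Lemma \ref{IdealAlmostDisjointLemma} and Corollary \ref{CorollaryLargeRange} for (ii)--(iii), which unwinds to exactly the chain you wrote (hereditary Baire property of $\operatorname{Fin}$ $\Rightarrow$ almost-disjoint family plus Cantor schemes $\Rightarrow$ disjoint clopens with Cantor schemes in $\beta\N\setminus\N$ $\Rightarrow$ $c_0(\mathfrak{c})$ in $C(\beta\N\setminus\N)$ $\Rightarrow$ transfer to $\ell_\infty$ via Proposition \ref{PropSpaceability}). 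The only cosmetic difference is that you handle (i) directly through Corollary \ref{CorollaryLargeRange}(i) rather than citing the packaged Theorem \ref{MainTheorem}(ii), which amounts to the same thing.
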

\begin{proof}
    The ideal of finite sets is an $F_\sigma$ ideal. Therefore is has the hereditary Baire property. Thus (i) follows from Theorem \ref{MainTheorem}. By Proposition \ref{PropositionBaire}, Lemma \ref{IdealAlmostDisjointLemma} and Corollary \ref{CorollaryLargeRange} we obtain (ii) and (iii). 
\end{proof}
Corollary \ref{CorollaryMainTheorem}(ii) should be compared with \cite{FMMS} and Corollary \ref{CorollaryMainTheorem}(iii) with \cite{BG,BBFG} where the Authors considered algebrability of those sets.

It is easy to give an example of an ideal $\mathcal{I}$ which has the Baire property but not the hereditary Baire property. Let $\operatorname{Fin}(2\N-1)$ be an ideal of finite sets on odd numbers, and let $\mathcal{I}_{\max}(2\N)$ be a maximal ideal on even numbers. We define $\mathcal{I}$ as $\{A\cup B:A\in\operatorname{Fin}(2\N-1)$ and $B\in\mathcal{I}_{\max} (2\N)\}$. Take any $n_1<n_2<...$ such that $n_{i+1}-n_i\geq 2$ for every $i$. Then any union $X$ of infinitely many integer intervals $[n_i,n_{i+1})$ contains infinitely odd numbers, and therefore $X\notin\mathcal{I}$. By Jalali-Naini--Talagrand characterization, $\mathcal{I}$ has the Baire property. However, $\mathcal{I}\upharpoonright 2\N=\mathcal{I}_{\max} (2\N)$ and maximal ideals on $2\N$ do not have Baire property in $\{0,1\}^{2\N}$.   Now, consider the space $P_{\mathcal{I}}$. It is a union of two disjoint clopen sets $P_{\mathcal{I}}\cap\langle 2\N\rangle$ and $P_{\mathcal{I}}\cap\langle 2\N-1\rangle$. Note that $P_{\mathcal{I}}\cap\langle 2\N\rangle$ has one element, while $P_{\mathcal{I}}\cap\langle 2\N-1\rangle$ is homeomorphic to $\beta\N\setminus\N$. In general, if there is $A\notin\mathcal{I}$ such that $\mathcal{I}\upharpoonright A$ has the hereditary Baire property, then $\mathcal{I}$ fulfills the assertions of both (i) and (ii) in Theorem \ref{MainTheorem}. 

We have been informed by Jacek Tryba, that in \cite{KST} there is an even more extreme ideal. Let $\mathcal{I}_{\max}$ be a maximal ideal on $\N$. Define $\mathcal{I}$ on $\N\times\N$ by $
A\in\mathcal{I}\iff P_1(A)\in\mathcal{I}_{\max}$
where $P_1(n,k)=n$ is the projection on the first coordinate. In \cite[Proposition 3.15]{KST} it is proved that $\mathcal{I}$ have the Baire property. 
Given any $C\notin\mathcal{I}$ we have $P_1(C)\notin\mathcal{I}_{\max}$. For any $n\in P_1(C)$ find $k_n$ with $(n,k_n)\in C$. Then $B=\{(n,k_n):n\in P_1(C)\}\notin\mathcal{I}$. Then $\mathcal{I}\upharpoonright B$ is an maximal ideal and therefore it does not have the Baire property. We will show that $\mathcal{I}$ still fulfills the assertion of Proposition \ref{PropositionBaire}. Let $\{A_\alpha:\alpha<\mathfrak c\}$ be an almost disjoint family on $\N$. For each $\alpha$ we define $\{A_{\alpha,s}:s\in\{0,1\}^{<\N}\}$ of subsets of $A$ such that $A_{\alpha,\emptyset}=A$, $A_{\alpha,s}$ is infinite, $A_{\alpha,s\hat{\;}i}\subset A_{\alpha,s}$ ($i=0,1$) and $A_{\alpha,s\hat{\;}0}\cap A_{\alpha,s\hat{\;}1}=\emptyset$ for every $s$. Then for each $\alpha$ and $s$ we have $A_{\alpha,s}\notin\mathcal{I}$. Mimicing  the proof of Theorem \ref{SpaceabilityFamilyPTheorem} one can show that $\mathcal{I}$ fulfills the assertions of both (i) and (ii) in Theorem \ref{MainTheorem}. 
 
\begin{problem}
    Is there an ideal $\mathcal{I}$ fulfilling the assertion (i) in Theorem \ref{MainTheorem} but not (ii)? 
\end{problem}

\section{Functions in $C(K)$ with interval ranges}\label{Section2Lineability}

In this section we will show that the set $\{f\in C(P_{\mathcal{I}}):\operatorname{rng}f$ is an interval $\}\cup\{0\}$ that can contain a very rich subspace. This is an idealized version of the result of \cite{LRS}, which says that the set of sequences with limit sets of the cardinality continuum contains, up to the zero sequence, an isometric copy of $\ell_\infty$. It should be compared with \cite[Proposition 3.1]{MP}, which says that if a subspace of $\ell_\infty$ is isomorphic to $\ell_\infty$, then it contains a sequence $x$ such that $\operatorname{Lim}(x)$ contains an interval. 

\begin{thm}
Assume that $\mathcal{I}$ has the Baire property. Then the set
\[
\{f\in C(P_{\mathcal{I}}):\operatorname{rng}f\text{ is an interval}\}\cup\{0\}
\]
contains subspace isometric to $C(\beta\N\setminus\N)$.
\end{thm}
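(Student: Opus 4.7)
My plan is to produce a continuous surjection $\rho : P_{\mathcal{I}} \twoheadrightarrow (\beta\N\setminus\N)\times[-1,1]$ and pull back functions multiplicatively, setting
\[
\Phi : C(\beta\N\setminus\N)\to C(P_{\mathcal{I}}),\qquad \Phi(f)(\mathcal{U}) = f\bigl(\operatorname{pr}_1 \rho(\mathcal{U})\bigr)\cdot\operatorname{pr}_2\rho(\mathcal{U}).
\]
Linearity is automatic, and surjectivity of $\rho$ gives both $\|\Phi(f)\|_\infty=\sup_{v,t}|f(v)t|=\|f\|_\infty$ and, for $f\not\equiv 0$, the range
\[
\operatorname{rng}\Phi(f)=\{f(v)t:v\in\beta\N\setminus\N,\,t\in[-1,1]\}=\bigcup_{r\in\operatorname{rng}f}[-|r|,|r|]=[-\|f\|_\infty,\|f\|_\infty],
\]
a non-degenerate interval for $f\not\equiv 0$, while $\Phi(0)=0$. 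This delivers the promised isometric copy of $C(\beta\N\setminus\N)$ inside the set of interval-range functions.

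To build $\rho$, I will invoke the Jalali-Naini--Talagrand characterization furnished by the Baire-property assumption: fix a sequence $n_1<n_2<\cdots$ such that no member of $\mathcal{I}$ contains infinitely many intervals $I_k=[n_k,n_{k+1})$. Picking a bijection $\sigma:\N\to\N\times\N$, I split the JNT intervals into row and column families
\[
A_n=\bigsqcup\{I_k:\sigma(k)_1=n\},\qquad B_m=\bigsqcup\{I_k:\sigma(k)_2=m\},
\]
so each $A_n,B_m$ is a union of infinitely many JNT intervals (hence outside $\mathcal{I}$) while every grid cell $A_n\cap B_m=I_{\sigma^{-1}(n,m)}$ lies in $\mathcal{I}$. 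Mimicking Lemma \ref{IdealAlmostDisjointLemma}, set $\pi(\mathcal{U})=\{S:\bigsqcup_{n\in S}A_n\in\mathcal{U}\}$ and $\pi'(\mathcal{U})=\{T:\bigsqcup_{m\in T}B_m\in\mathcal{U}\}$, both continuous maps $P_{\mathcal{I}}\to\beta\N$; compose $\pi'$ with a continuous surjection $g:\beta\N\setminus\N\twoheadrightarrow[-1,1]$ (obtained by $\beta$-extending a dense sequence $\N\to[-1,1]$ and restricting) to get $h=g\circ\pi':P_{\mathcal{I}}\to[-1,1]$, and take $\rho=\pi\times h$. Surjectivity onto $(\beta\N\setminus\N)\times[-1,1]$ reduces, for given $(v,t)$ and any $w\in g^{-1}(t)\cap(\beta\N\setminus\N)$, to the filter
\[
\mathcal{I}^*\cup\bigl\{\textstyle\bigsqcup_{n\in S}A_n:S\in v\bigr\}\cup\bigl\{\textstyle\bigsqcup_{m\in T}B_m:T\in w\bigr\}
\]
being proper; by the grid disjointness every finite intersection collapses to $\bigsqcup_{(n,m)\in S\times T}I_{\sigma^{-1}(n,m)}$ with $S,T$ infinite, a union of infinitely many JNT intervals that JNT places outside $\mathcal{I}$; any ultrafilter extension is then a point of $P_{\mathcal{I}}$ sent by $\rho$ to $(v,t)$.

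The main obstacle I expect is the fact that $\pi$ can degenerate on some ultrafilters, landing in the principal part $\N\subset\beta\N$ precisely when some $A_n\in\mathcal{U}$, so that $f\circ\pi$ is not directly defined on the clopen piece $\langle A_n\rangle\cap P_{\mathcal{I}}$. The saving point is that $A_n\cap B_m\in\mathcal{I}$ prevents any $\mathcal{U}$ from carrying both an $A_n$ and a $B_m$, so on $\langle A_n\rangle\cap P_{\mathcal{I}}$ the partner map $\pi'$ must be non-principal; one then modifies $\Phi$ on that clopen piece by interchanging the roles of $\pi$ and $\pi'$ and using Proposition \ref{PropLargeRange} with the Cantor scheme of clopens inside $\langle A_n\rangle\cap P_{\mathcal{I}}$ (provided by JNT, since $A_n$ is a union of infinitely many JNT intervals), and analogously on the pieces $\langle B_m\rangle\cap P_{\mathcal{I}}$. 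The delicate bookkeeping needed to splice these three locally defined pieces into one continuous function on $P_{\mathcal{I}}$ without losing isometry or the interval-range property is the technical heart of the proof.
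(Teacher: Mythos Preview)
Your overall strategy is sound: if you had a continuous surjection $\rho:P_{\mathcal{I}}\twoheadrightarrow(\beta\N\setminus\N)\times[-1,1]$, then $\Phi(f)=\bigl(f\circ\operatorname{pr}_1\circ\rho\bigr)\cdot\bigl(\operatorname{pr}_2\circ\rho\bigr)$ would indeed be a linear isometry landing in the interval-range functions. The problem is that you never actually construct such a $\rho$, and your proposed repair does not work.

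You correctly observe that $\pi(\mathcal{U})$ is principal exactly on the open set $\bigcup_n\langle A_n\rangle\cap P_{\mathcal{I}}$, and similarly for $\pi'$. The ``good'' region $G$ on which both are non-principal is therefore \emph{closed but not clopen} in $P_{\mathcal{I}}$. Your splicing scheme defines $\Phi(f)$ as $f(\pi(\cdot))\cdot g(\pi'(\cdot))$ on $G$ and as $f(\pi'(\cdot))\cdot(\text{something})$ on each $\langle A_n\rangle$. Take a net $\mathcal{U}_\alpha$ in $\langle A_n\rangle$ converging to some $\mathcal{U}\in G$ (such limits exist because $G$ is not open). By continuity of $\pi'$ you get $f(\pi'(\mathcal{U}_\alpha))\to f(\pi'(\mathcal{U}))$, whereas on $G$ the value involves $f(\pi(\mathcal{U}))$; there is no reason for $f(\pi(\mathcal{U}))$ and $f(\pi'(\mathcal{U}))$ to agree, so the pieces do not match on the boundary. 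The phrase ``delicate bookkeeping'' hides a genuine obstruction, not a technicality: what you would really need is a norm-one linear extension operator $C(G)\to C(P_{\mathcal{I}})$, and there is no reason such an operator exists for an arbitrary closed subset of an F-space.

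The paper sidesteps all of this by working in $\ell_\infty$ rather than on the Stone space. It first uses Jalali-Naini--Talagrand to build a single sequence $z\in\ell_\infty$ with $\operatorname{L}_{\mathcal{I}}(z)=[-1,1]$, then, via a bijection $n\mapsto(k_n,\ell_n)$ of $\N$ with $\N\times\N$, defines $F(x)(n)=x(k_n)z(\ell_n)$. Linearity and isometry of $F:\ell_\infty\to\ell_\infty$ are immediate, and the image sits in $\{y:\operatorname{L}_{\mathcal{I}}(y)\text{ is an interval}\}\cup\{0\}$; passing to the quotient $\ell_\infty/c_0(\mathcal{I})\cong C(P_{\mathcal{I}})$ gives the result. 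No topological surgery on $P_{\mathcal{I}}$ is needed. If you want to salvage your topological viewpoint, a cleaner route is to note that the index map $j\mapsto k$ (for $j\in I_k$) induces a continuous surjection $P_{\mathcal{I}}\to\beta\N\setminus\N$ that never hits principal ultrafilters, thereby reducing the general statement to the case $\mathcal{I}=\operatorname{Fin}$.
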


\begin{proof}
Let $\mathcal{I}$ be an ideal with the Baire property. Then there is a sequence $n_1<n_2<...$ of indices such that $\bigcup_{i\in K}[n_i,n_{i+1})\notin\mathcal{I}$ for any infinite set $K\subset\N$. Let $\{A_\ell:\ell\in\N\}$ be a partition of $\N$ into infinite sets and let $\{q_\ell:\ell\in\N\}$ be an enumeration of rationals in $[-1,1]$. Define $z\in\ell_\infty$ be the following formula
\[
z(k)=q_\ell\iff k\in\bigcup_{i\in A_\ell}[n_i,n_{i+1}).
\]
Then clearly $\operatorname{L}_{\mathcal{I}}(z)=[-1,1]$.

Now, let $\N\ni n\mapsto(k_n,\ell_n)\in\N\times\N$ be a bijection. Then the formula $F(x)(n)=x(k_n)z(\ell_n)$ defines an isometrical embedding of $\ell_\infty$ into itself. Note that $\operatorname{L}_{\mathcal{I}}(F(x))=[-\Vert x\Vert,\Vert x\Vert]$. Thus the set $\{x\in\ell_\infty:\operatorname{L}_{\mathcal{I}}(x)$ is an interval$\}\cup\{0\}$ contains an isomorphic copy of $\ell_\infty$, and therefore 
$
\{f\in C(P_{\mathcal{I}}):\operatorname{rng}f\text{ is an interval}\}\cup\{0\}
$
contains a subspace isometric to $C(\beta\N\setminus\N)$.
\end{proof}

From the proof of \cite[Proposition 3.1]{MP} one can easily obtain that if $T:\ell_\infty/c_0\to\ell_\infty/c_0$ is an isometrical embedding, then there is $x\in\ell_\infty$ such that the set of limit points of $T([x]_{c_0})$ contains an interval. The fact that $\ell_\infty/c_0$ is isometric to $C(\beta\N\setminus\N)$, implies that very rich space can contain only sets of functions with ranges of cardinality continuum. Therefore we have the following.

\begin{prop}
Let $K$ be a compact space. If $\{f\in C(K):\vert\operatorname{rng}f\vert=\kappa\}\cup\{0\}$ contains an isometric copy of $C(\beta\N\setminus\N)$, then $\kappa=\mathfrak c$. 
\end{prop}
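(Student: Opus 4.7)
The plan is to sandwich $\kappa$ between $\mathfrak c$ on both sides. The upper bound is immediate: for any $f\in C(K)$ the range $f(K)$ is a compact subset of $\R$, hence has cardinality at most $\mathfrak c$, and since the hypothesis provides non-zero elements of $\{f\in C(K):\vert\operatorname{rng}f\vert=\kappa\}$ this gives $\kappa\leq\mathfrak c$.

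For the matching lower bound, let $T\colon C(\beta\N\setminus\N)\to C(K)$ be the isometric embedding whose image lies in $\{f\in C(K):\vert\operatorname{rng}f\vert=\kappa\}\cup\{0\}$. The whole game is to produce some $f_0\in C(\beta\N\setminus\N)$ such that $\operatorname{rng}(T(f_0))$ contains a non-degenerate interval: once that is done, $T(f_0)\neq 0$, so $\vert\operatorname{rng}(T(f_0))\vert=\kappa$ by hypothesis, and the interval has cardinality $\mathfrak c$, forcing $\kappa\geq\mathfrak c$.

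To produce such $f_0$ I would invoke the extension of \cite[Proposition 3.1]{MP} recalled in the paragraph above the statement. That extension was formulated for isometric embeddings $T\colon\ell_\infty/c_0\to\ell_\infty/c_0$, but the underlying MP argument relies only on the Banach-space structure of the embedding together with the fact that its domain is $\ell_\infty/c_0\cong C(\beta\N\setminus\N)$. The target there merely plays the role of an ambient space in which \emph{range of a function} makes sense, which is equally valid for any $C(K)$; consequently the same reasoning produces an $f_0\in C(\beta\N\setminus\N)$ with $T(f_0)\in C(K)$ whose range contains an interval. The only non-trivial step is this verification that MP's proof goes through mutatis mutandis with $C(K)$ in place of $\ell_\infty/c_0$ as the target; once that is granted, combining the two bounds yields $\kappa=\mathfrak c$.
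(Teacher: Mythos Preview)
Your proposal is correct and follows essentially the same route as the paper. The paper does not give a formal proof of this proposition; it is derived from the preceding paragraph, which says exactly what you say: one reads the proof of \cite[Proposition~3.1]{MP}, observes that it yields (for any isometric embedding of $\ell_\infty/c_0\cong C(\beta\N\setminus\N)$) an element whose image has range containing an interval, and concludes. You are in fact slightly more careful than the paper in two respects: you make the trivial upper bound $\kappa\le\mathfrak c$ explicit, and you flag that the MP argument---stated there for target $\ell_\infty/c_0$---must be checked to go through for an arbitrary $C(K)$ target, which is precisely the point the paper glosses over.
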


\end{document}